\declaretheorem[name=Theorem]{theorem}
\declaretheorem[name=Lemma]{lemma}
\declaretheorem[name=Proposition]{proposition}
\declaretheorem[name=Corollary]{corollary}
\declaretheorem[name=Remark]{remark}
\def\sgn{\textnormal{sgn}}
\DeclareMathOperator{\dist}{dist}
\DeclareMathOperator{\exs}{exs}
\DeclareMathOperator{\supp}{supp}
\newcommand{\reals}{\mathbb{R}}
\newcommand{\natnums}{{{\rm l} \kern -.13em {\rm N} }}
\newcommand{\nats}{\mathbb{N}}
\newcommand{\Ex}{\mathbb{E}}
\newcommand{\setd}{{ d \kern -.15em l}}
\newcommand{\nsum}{\mathop{\sum}\nolimits}
\renewcommand{\liminf}{\mathop{\rm liminf}}
\newcommand{\nsup}{\mathop{\rm sup}\nolimits}
\newcommand{\lto}{\,{\lower 1pt\hbox{$\rightarrow$}}\kern -10pt
     \hbox{\raise 4pt \hbox{$\, \scriptstyle l$}}\hskip7pt}
\newcommand{\eto}{\,{\lower 1pt\hbox{$\rightarrow$}}\kern -10pt
     \hbox{\raise 4pt \hbox{$\, \scriptstyle e$}}\hskip7pt}
\newcommand{\hto}{\,{\lower 1pt\hbox{$\rightarrow$}}\kern -11pt
     \hbox{\raise 4pt \hbox{$\, \scriptstyle h$}}\hskip7pt}
\newcommand{\pto}{\,{\lower 1pt\hbox{$\rightarrow$}}\kern -11pt
     \hbox{\raise 4.5pt \hbox{$\, \scriptstyle p$}}\hskip7pt}
\newcommand{\cto}{\,{\lower 1pt\hbox{$\rightarrow$}}\kern -11pt
     \hbox{\raise 4pt \hbox{$\, \scriptstyle c$}}\hskip7pt}
\newcommand{\gto}{\,{\lower 1pt\hbox{$\rightarrow$}}\kern -11pt
     \hbox{\raise 4.5pt \hbox{$\, \scriptstyle g$}}\hskip7pt}
\newcommand{\sto}{\,{\lower 1pt\hbox{$\rightarrow$}}\kern -10pt
     \hbox{\raise 4pt \hbox{$\, \scriptstyle s$}}\hskip7pt}
\def\dd{\textrm{d}}
\newcommand{\cmark}{\ding{51}}
\newcommand{\xmark}{\ding{55}}
\title{Failure of uniform laws of large numbers \\ for subdifferentials and beyond}
\author{Lai~Tian\thanks{Daniel J.~Epstein Department of Industrial and Systems Engineering, University of Southern California, Los Angeles, CA 90089; Emails: {\tt \{laitian, royset\}@usc.edu}} \and Johannes O.~Royset\footnotemark[1]}
\date{March 16, 2026}
\def\epsilon{\varepsilon}
\newcommand{\bit}{\mathop{\textnormal{bit}}\nolimits}
\newcommand{\ball}{\mathbb{B}}
\newcommand{\compl}{\mathsf{c}}
\begin{document}
\maketitle
\begin{abstract}
We provide counterexamples showing that uniform laws of large numbers do not hold for subdifferentials under natural assumptions. Our constructions are univariate random Lipschitz functions and bivariate random convex functions with two smooth pieces. Consequently, they resolve the questions posed by Shapiro and Xu [\textit{J.~Math.~Anal.~Appl.}, 325 (2007), 1390--1399] in the negative. They also demonstrate the failure of certain graphical and pointwise laws for subdifferentials, revealing fundamental barriers to the consistency of sample-average approximation and subdifferential approximation.
\end{abstract}

\section{Introduction}\label{sec:intro}

Uniform laws of large numbers (LLNs) for gradients are of fundamental importance in stochastic programming \cite[Section 7.2.5]{shapiro2021lectures}, and their validity and rates of convergence are now well understood for random smooth functions; see, e.g., \cite{mei2018landscape,foster2018uniform}. However, the question of whether similar uniform laws hold for their nonsmooth counterparts has remained open for nearly two decades \cite[Remark 2]{shapiro2007uniform}. Positive results are known only for special function classes \cite{shapiro1989asymptotic,shapiro2007uniform,teran2008uniform,xu2010uniform,ruan2024subgradient}, for \emph{enlarged} subdifferentials \cite{shapiro1996convergence,shapiro2007uniform}, and for weaker notions of convergence \cite{norkin-wets,davis2022graphical,salim2023strong}; see also the discussion in \cite[Section 7.2.6]{shapiro2021lectures} and \cite[p.~471]{molchanov2017theory}.

To set the stage, let nonempty $X \subset \reals^d$ be compact, $\Xi \subset \reals^m$, and $\bm{\xi},  \bm{\xi}^1,\bm{\xi}^2,\ldots$ be independent and identically distributed (iid) $\Xi$-valued random variables on the complete probability space $(\Omega, \mathcal{F}, \mathbb{P})$. Throughout, we use boldface for random variables. Given  $f:\Xi \times \reals^d \to \reals$ with (Borel) measurable $f(\cdot, x)$ for all $x\in \reals^d$, assume that the expectation function $x \mapsto \Ex[f(\bm{\xi},x)]$ is finite-valued and $f(\xi,\cdot)$ is $L(\xi)$-Lipschitz on an open neighborhood of $X$ for any $\xi \in \Xi$, with measurable $L$ and $\Ex[L(\bm{\xi})] < \infty$. A general uniform LLN for \emph{enlarged} set-valued mappings appears in \cite[Theorem 2]{shapiro2007uniform} and can be applied to the Clarke subdifferential $\partial_x f(\xi,x)=\partial (f(\xi,\cdot))(x)$.
 Specifically, for any fixed $r > 0$, $\mathbb{P}$-almost surely ($\mathbb{P}$-a.s.), one has
\begin{align}
\lim_{\nu \to \infty}\sup_{x \in X} \exs\Big(\tfrac{1}{\nu}\nsum_{i=1}^\nu \partial_x f (\bm{\xi}^i, x); \bigcup\nolimits_{y\in \ball(x, r)\cap X}\Ex[\partial_x f(\bm{\xi},y)]\Big) =0, \label{eq:enlarged-a}\\
\lim_{\nu \to \infty}\sup_{x \in X} \exs\Big(\Ex[\partial_x f(\bm{\xi},x)]; \bigcup\nolimits_{y\in \ball(x, r)\cap X}\tfrac{1}{\nu}\nsum_{i=1}^\nu \partial_x f(\bm{\xi}^i, y)\Big) =0, \label{eq:enlarged-b}
\end{align}
where $\exs(\cdot;\cdot)$ is the \emph{excess} of a set relative to another one and the integral is taken in the Aumann sense; see the end of this section for a summary of notation. 
When $f(\xi,\cdot)$ is further assumed to be subdifferentially regular \cite[Definition 7.25]{VaAn} on $X$ for all $\xi \in \Xi$, we have 
\[
\Ex[\partial_x f(\bm{\xi},x)]=\partial \Ex[ f(\bm{\xi},\cdot)](x),\quad \tfrac{1}{\nu}\nsum_{i=1}^\nu \partial_x f (\xi^i, x)=\partial \big(\tfrac{1}{\nu}\nsum_{i=1}^\nu f (\xi^i, \cdot)\big)(x),
\] 
for any $\xi^1,\ldots, \xi^\nu \in \Xi$ and $x \in X$;
see \cite[Theorem 2.7.2]{clarke1990optimization}.

It is then natural to ask, as also explicitly mentioned in \cite[Remark 2]{shapiro2007uniform}, whether we can set $r=0$ in \eqref{eq:enlarged-a} and/or \eqref{eq:enlarged-b}, which, if both are true, would imply the following uniform law for subdifferentials in the Hausdorff sense:
\begin{equation}\label{eq:r0ulln}
\lim_{\nu \to \infty}\sup_{x \in X} \setd\Big(\Ex[ \partial_x f(\bm{\xi},x)], \tfrac{1}{\nu}\nsum_{i=1}^\nu \partial_x f (\bm{\xi}^i, x) \Big) =0.
\end{equation}
While the general questions about random Lipschitz functions remain elusive, the uniform law in \eqref{eq:r0ulln} is actually achievable for several special function classes. One sufficient condition is that, for every $x \in X$, the subdifferential $\partial_x f(\xi,x)$ is a singleton for almost every $\xi \in \Xi$, which  implies \eqref{eq:r0ulln} and also that  $x\mapsto \Ex[f(\bm{\xi},x)]$ is continuously differentiable; see \cite[Proposition 2.2]{shapiro1989asymptotic}, \cite[Theorem 6]{shapiro2007uniform}, and \cite[Theorem 7.52]{shapiro2021lectures}.
Beyond requiring the expectation function to be smooth, the work \cite{teran2008uniform} guarantees \eqref{eq:r0ulln} when the subdifferential mapping $\partial_x f$ satisfies a separable range condition; see \cite[Theorem 4]{teran2008uniform} and also \cite[Theorem 5.1.31]{molchanov2017theory}.
While this may be appealing for certain applications, the range of $\partial_x f$ is not separable even for the simple random convex function $f(\bm{\xi},x)=\max\{x - \bm{\xi},0\}$ with $X=[0,1]$ and $\bm{\xi}$ uniformly distributed on $\Xi=[0,1]$ as pointed out in \cite[Example 3.4]{norkin-wets}; see also \cite[Example 5.1.33]{molchanov2017theory}.
The recent work \cite{ruan2024subgradient} establishes the uniform law \eqref{eq:r0ulln} for a subclass of random convex-composite functions. The uniform law in \cite[Theorem 5]{ruan2024subgradient} requires the outer function to be convex, univariate, and deterministic, while the inner function can be random but smooth and Vapnik--Chervonenkis-major (VC-major) \cite[Section 2.6.4]{vanderVaartWellner.23}. It remains unclear whether the VC-type assumption can be removed when the explicit rate in \cite[Theorem 5]{ruan2024subgradient} is not required.
Resolving these questions would deepen our understanding of sample-average approximation (SAA) from stochastic programming and empirical risk minimization in machine learning.

In this paper, we give negative answers to all these questions by providing explicit counterexamples.
Our negative results hold under natural assumptions, requiring only simple nonsmooth components. 
 We report the main results in \Cref{sec:lip,sec:cvx}, and then discuss in \Cref{sec:dis} the implications of our constructions for the consistency of SAA and for subdifferential approximation.
 All deferred proofs  are collected in \Cref{sec:proof}.

\paragraph{Notation.} 
We use mostly standard notation. 
The Borel $\sigma$-algebra on $\Xi$ is denoted by $\mathcal{B}(\Xi)$. 
 A random function $f:\Xi\times \reals^d \to \reals$ is \emph{Carath\'eodory} if $f(\xi, \cdot)$ is continuous for each $\xi$ and $f(\cdot, x)$ is (Borel) measurable for each $x$; whenever defined, its \emph{gradient} and \emph{Clarke subdifferential} at $y$ are denoted by $\nabla_x f(\xi,y)=\nabla (f(\xi,\cdot))(y)$ and $\partial_x f(\xi,y)=\partial (f(\xi,\cdot))(y)$, respectively. Let $\|\cdot\|$ denote the  \emph{Euclidean norm}, and define $\ball(x,r)=\{y\mid \|y-x\|\leq r\}$. For $\nu \in \nats=\{1,2,\ldots\}$, write $[\nu]=\{1, \ldots, \nu\}$.
 A function $g:\reals^d \to \reals$ is $L$-Lipschitz on $X$ (or simply $L$-Lipschitz when $X=\reals^d$) if $|g(x) - g(y)| \leq L\|x - y\|$ for all $x,y \in X$; it is $L$-smooth on $X$ if it is continuously differentiable ($C^1$) with $L$-Lipschitz $\nabla g$ on $X$ (or on an open set containing $X$). A set-valued mapping $S:\Xi \rightrightarrows \reals^d$ is measurable if $\{\xi \in \Xi \mid S(\xi)\cap C \neq \emptyset\} \in \mathcal{B}(\Xi)$ for every open $C\subset \reals^d$; its expectation $\Ex[S(\bm{\xi})]\subset \reals^d$ is defined in the Aumann sense; see, e.g., \cite[Definition 7.39]{shapiro2021lectures}.
 For nonempty sets $C,D\subset \reals^d$ and a point $x \in \reals^d$, the distance between $x$ and $C$ is $\dist(x,C)=\inf_{z \in C} \|x - z\|$; the \emph{excess} of $C$ relative to $D$ is 
$
\exs(C;D)=\nsup_{z \in C} \dist(z, D);
$ 
and the \emph{Hausdorff distance} between $C$ and $D$ is $\setd(C,D) = \max\{\exs(C;D), \exs(D;C)\}$. 
The $\{0,1\}$-indicator function of a set $C$ is defined by $\mathds{1}_C(x)=1$ if $x \in C$ and $\mathds{1}_C(x)=0$ otherwise.
 For $\epsilon \geq 0$, the $\epsilon$-subdifferential of a convex function $h:\reals^d \to \reals$ is defined as
	$
	\partial^\epsilon h(x)=\{s \in \reals^d\mid h(y) - h(x) \geq s^\top (y - x) - \epsilon,\forall y\},
	$
	which 
	coincides with the usual convex subdifferential when $\epsilon=0$. For $t \in \reals$, we write $\lceil t\rceil$ and $\lfloor t\rfloor$ for the ceiling and floor of $t$, respectively.

\section{Main Results}\label{sec:main}

\subsection{Random Lipschitz Functions}\label{sec:lip}

We begin with a general negative result for univariate random Lipschitz functions.

\begin{theorem}[random Lipschitz]\label{thm:hard-lip}
Let $X=[0,1]\subset \reals$ and $\bm{\xi}, \bm{\xi}^1,\bm{\xi}^2,\ldots$ be iid random variables on the complete probability space $(\Omega,\mathcal{F},\mathbb{P})$, each uniformly distributed on $\Xi=[0,1]$.	
There exist a Carath\'eodory function $f:\Xi\times \reals \to \reals$ and $\delta^\nu \in (0,1]\downarrow 0$ such that the following hold.
\begin{enumerate}[label=\textnormal{(\alph*)}]
	\item For any $\xi \in \Xi$, the function $f(\xi, \cdot)$ is $1$-Lipschitz.
	\item For any $x \in \reals$, $|\Ex[f(\bm{\xi},x)]|<\infty$.
	\item For any $\nu \in \nats$, the set $D^\nu=\{x  \in X\mid f(\xi,\cdot)\text{ is }C^1\text{ on }\ball(x,\delta^\nu), \forall \xi \in \Xi\}$ is nonempty. \label{item:thm:lip-D}
	\item $\mathbb{P}$-a.s., one has \label{item:thm:lip-ineqs}
\[
\liminf_{\nu \to \infty} \sup_{x \in D^\nu} \inf_{y,\hat{y} \in \ball(x,\delta^\nu)} \Big| \Ex[\nabla_x f(\bm{\xi},y)] -\tfrac{1}{\nu}\nsum_{i=1}^\nu\nabla_x f(\bm{\xi}^i,\hat{y}) \Big| \geq \tfrac{1}{2}.
\] 
\end{enumerate}
\end{theorem}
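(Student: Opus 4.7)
The plan is to exhibit $f$ as a sum $f(\xi,x) = \nsum_{k\ge 1} g_k(\xi,x)$ of random piecewise-linear ``block functions'' $g_k(\xi,\cdot)$, each supported on a tiny deterministic interval $X_k\subset [0,1]$, chosen so that the blocks are deterministically well separated in $x$ and stochastically independent through $\xi$. Concretely, I would pick pairwise disjoint intervals $X_k=[a_k,b_k]$ of length $\ell_k = 2^{-k-10}$ (e.g., centered at $1-2^{-k}$), partition each $X_k$ into $2^{\nu_k}$ equal sub-intervals with $\nu_k=k$, and assign to the $j$-th sub-interval of $X_k$ a \emph{distinct} bit $\bit_{\pi(k,j)}(\xi)$ of the binary expansion of $\xi\in[0,1]$ via an injection $\pi:\nats\times\nats\to\nats$. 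Setting $\tau_{k,j}(\xi) = 2\bit_{\pi(k,j)}(\xi)-1\in\{-1,+1\}$, let $g_k(\xi,\cdot)$ be the continuous piecewise-linear function with slope $\tau_{k,j}(\xi)$ on the $j$-th sub-interval of $X_k$, extended constantly outside $X_k$. Since $\bm\xi$ is uniform on $[0,1]$, the family $\{\tau_{k,j}(\bm\xi)\}_{k,j}$ is i.i.d.\ symmetric Bernoulli. One then has $\|g_k\|_\infty \le \ell_k$, and at any $x$ at most one $g_k$ has nonzero derivative, so $f=\sum g_k$ converges uniformly to a Carath\'eodory, bounded, $1$-Lipschitz function; this gives (a) and (b).

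Take $\delta^\nu = 2^{-3\nu}$. By construction, if $x$ is the center of a sub-interval of some $X_k$ with $\nu+1 \le k \le (3\nu-11)/2$, then $\ball(x,\delta^\nu)$ lies strictly inside that sub-interval and is disjoint from every other $X_{k'}$, so $f(\xi,\cdot)$ agrees on $\ball(x,\delta^\nu)$ with a \emph{linear} function of slope $\tau_{k,j}(\xi)$ for \emph{every} $\xi\in\Xi$; hence it is $C^1$ there, establishing (c). For every $y,\hat y\in\ball(x,\delta^\nu)$ one then computes $\Ex[\nabla_x f(\bm\xi,y)] = \Ex[\tau_{k,j}(\bm\xi)] = 0$ and $\tfrac1\nu\nsum_{i=1}^\nu \nabla_x f(\bm\xi^i,\hat y) = \tfrac1\nu\nsum_i \tau_{k,j}(\bm\xi^i)$, reducing (d) to a clean Bernoulli-counting problem.

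For fixed $\nu$, call a sub-interval $(k,j)$ a \emph{success} if $\tau_{k,j}(\bm\xi^i) = +1$ for all $i\in[\nu]$. This happens with probability $2^{-\nu}$, and across the $2^{\nu_k}=2^k$ sub-intervals of $X_k$---which draw on pairwise distinct bits of $\xi$ and are thus mutually independent---the probability that $X_k$ contains at least one success is at least $1-(1-2^{-\nu})^{2^k} \ge 1-e^{-1}$ once $k\ge \nu$. Because the bits assigned to different $k$ are also disjoint, the events ``$X_k$ contains a success at scale $\nu$'' are independent across $k$. Over the $\Omega(\nu)$ usable blocks $k\in[\nu+1,(3\nu-11)/2]$, the probability $P_\nu$ that \emph{none} succeeds is bounded by $e^{-c\nu}$ for some $c>0$, so $\sum_\nu P_\nu<\infty$. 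Borel--Cantelli then yields that $\mathbb{P}$-a.s., for all sufficiently large $\nu$ a successful sub-interval exists; its center $x^\nu\in D^\nu$ produces empirical gradient $+1$ everywhere on $\ball(x^\nu,\delta^\nu)$ while the expected gradient vanishes everywhere on that ball, so the inner infimum in (d) equals $1$ and the liminf is at least $1\ge \tfrac{1}{2}$.

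The main delicacy is the simultaneous balancing of the scale parameters $(\ell_k,\nu_k,\delta^\nu)$: the sub-interval width $\ell_k/2^{\nu_k}$ must exceed $2\delta^\nu$ (so that $\ball(x^\nu,\delta^\nu)$ fits inside a single sub-interval, ensuring $D^\nu$ is deterministically nonempty); the gap between $X_k$ and its neighbors must exceed $\delta^\nu$ (so that $\ball(x^\nu,\delta^\nu)$ cannot leak into another block); and yet $\nu_k$ must be large enough in $\nu$ with enough admissible blocks per scale to make the failure probabilities Borel--Cantelli summable in $\nu$. Once these three scales are reconciled, the remaining tasks---verifying Carath\'eodory measurability of $f$, computing $\Ex[\nabla_x f(\bm\xi,y)]$ via bit-independence, and the $\exs$-type geometric bookkeeping---are routine.
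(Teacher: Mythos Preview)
Your proposal is correct and follows essentially the same approach as the paper: encode i.i.d.\ Bernoulli bits via the binary expansion of $\xi$, assign them as slopes of a piecewise-linear $f(\xi,\cdot)$ on a countable family of disjoint intervals accumulating at a point, and use Borel--Cantelli to guarantee that, almost surely for all large $\nu$, some interval has all $\nu$ sampled bits equal, creating a constant gap between the empirical and expected gradients. The paper's construction is a bit leaner---it places a \emph{single} bit $\bit_k(\xi)$ on each interval $B_k$ (no nested sub-intervals or injection $\pi$) and uses slopes in $\{0,1\}$ rather than $\{-1,+1\}$, which yields exactly the stated gap $\tfrac12$ instead of your $1$---but the mechanism is the same.
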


The definition of the set $D^\nu$ requires that, for every $x \in D^\nu$, the function $f(\xi,\cdot)$ is $C^1$ on $\ball(x,\delta^\nu)$ for all $\xi \in \Xi$, so that $\nabla_x f(\xi,\cdot)$ is well defined (hence $\nabla_x f(\cdot, y)$ is measurable) and continuous on $\ball(x,\delta^\nu)$. This is a stringent requirement, and the fact that we can show $D^\nu \neq \emptyset$ in \ref{item:thm:lip-D} and restrict $x$ to $D^\nu$ in \ref{item:thm:lip-ineqs} only strengthens the theorem. The following corollary is immediate from \Cref{thm:hard-lip}.

\begin{corollary}[subdifferential]\label{coro:lip}
	Under the assumptions of \Cref{thm:hard-lip}, and for $X$, $\Xi$, $\{\bm{\xi}, \bm{\xi}^\nu\}_\nu$, $f$, and $\{\delta^\nu\}_\nu$ constructed there, the following hold.  
	\begin{enumerate}[label=\textnormal{(\alph*)}]
	\item For any $\xi \in \Xi$, $\partial_x f(\xi,\cdot)$ is outer semicontinuous with $\sup_{z \in \partial_x f(\xi,x)} \|z\| \leq 1$ for all $x \in \reals$.
	\item For any $x \in \reals$, $\partial_x f(\cdot,x)$ is measurable.
	\item For any sequence $\{r^\nu \in [0, \delta^\nu]\}_\nu$, $\mathbb{P}$-a.s., one has \label{item:coro:lip-c}
	\begin{align}
\liminf_{\nu \to \infty}\sup_{x \in X} \exs\Big(\tfrac{1}{\nu}\nsum_{i=1}^\nu \partial_x f(\bm{\xi}^i, x); \bigcup\nolimits_{y\in \ball(x, r^\nu)\cap X}\Ex[\partial_x f(\bm{\xi},y)]\Big) \geq \tfrac{1}{2}, \label{eq:coro:subd-a}\\
\liminf_{\nu \to \infty}\sup_{x \in X} \exs\Big(\Ex[\partial_x f(\bm{\xi},x)]; \bigcup\nolimits_{y\in \ball(x, r^\nu)\cap X}\tfrac{1}{\nu}\nsum_{i=1}^\nu \partial_x f(\bm{\xi}^i, y)\Big) \geq \tfrac{1}{2}. \label{eq:coro:subd-b}%
\end{align}
	\end{enumerate}
\end{corollary}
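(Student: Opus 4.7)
The plan is to reduce each claim to a standard fact about Clarke subdifferentials or to \Cref{thm:hard-lip}(d), exploiting that on $D^\nu$ the subdifferentials collapse to singletons.

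For part (a), since \Cref{thm:hard-lip}(a) gives $f(\xi,\cdot)$ $1$-Lipschitz on $\reals$ for every $\xi$, the Clarke subdifferential $\partial_x f(\xi,\cdot)$ is nonempty-valued, contained in $\ball(0,1)$, and outer semicontinuous by the standard properties of Clarke subdifferentials of Lipschitz functions (e.g., \cite[Propositions 2.1.2 and 2.1.5]{clarke1990optimization}); the containment $\partial_x f(\xi,x)\subset\ball(0,1)$ and nonemptiness together yield $\dist(0,\partial_x f(\xi,x))\leq 1$. For part (b), since $f$ is Carath\'eodory with $f(\xi,\cdot)$ uniformly Lipschitz and $\reals^d=\reals$ is separable, $\partial_x f(\cdot,x)$ is a measurable closed-valued multifunction; this is standard and follows, for instance, from the Castaing representation obtained by rewriting Clarke's generalized directional derivative as a $\limsup$ of difference quotients over a countable dense subset of $\ball(x,r)\cap\reals$ (as in \cite[Theorem 7.44 and comments thereafter]{shapiro2021lectures}).

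For part (c), the core idea is that smoothness on $\ball(x,\delta^\nu)$ for $x\in D^\nu$ turns the relevant excess quantities into differences of gradients already controlled by \Cref{thm:hard-lip}(d). Fix $\nu$, any $x\in D^\nu$, and any $\xi\in\Xi$; then $f(\xi,\cdot)$ is $C^1$ on $\ball(x,\delta^\nu)$, so $\partial_x f(\xi,y)=\{\nabla_x f(\xi,y)\}$ for every $y\in\ball(x,\delta^\nu)$. Hence $\tfrac{1}{\nu}\nsum_{i=1}^\nu \partial_x f(\bm{\xi}^i,x)$ is the singleton $\{\tfrac{1}{\nu}\nsum_{i=1}^\nu \nabla_x f(\bm{\xi}^i,x)\}$, while for $y\in\ball(x,r^\nu)\subset\ball(x,\delta^\nu)$ the Aumann integral $\Ex[\partial_x f(\bm{\xi},y)]$ reduces to $\{\Ex[\nabla_x f(\bm{\xi},y)]\}$ since the integrand is the (measurable) single-valued function $\nabla_x f(\cdot,y)$. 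Consequently, for $x\in D^\nu$,
\[
\exs\Big(\tfrac{1}{\nu}\nsum_{i=1}^\nu \partial_x f(\bm{\xi}^i,x);\bigcup\nolimits_{y\in\ball(x,r^\nu)}\Ex[\partial_x f(\bm{\xi},y)]\Big)=\inf_{y\in\ball(x,r^\nu)}\Big|\tfrac{1}{\nu}\nsum_{i=1}^\nu \nabla_x f(\bm{\xi}^i,x)-\Ex[\nabla_x f(\bm{\xi},y)]\Big|,
\]
and an analogous identity holds for the quantity in \eqref{eq:coro:subd-b} with the roles of $y$ and $\hat y$ swapped.

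It remains to compare these to the quantity in \Cref{thm:hard-lip}(d). Restricting the infimum in \Cref{thm:hard-lip}(d) to $\hat y=x$ (a valid choice since $x\in\ball(x,\delta^\nu)$) and using $\ball(x,r^\nu)\subset\ball(x,\delta^\nu)$ only enlarges the infimum, so
\[
\inf_{y,\hat y\in\ball(x,\delta^\nu)}\Big|\Ex[\nabla_x f(\bm{\xi},y)]-\tfrac{1}{\nu}\nsum_{i=1}^\nu\nabla_x f(\bm{\xi}^i,\hat y)\Big|\leq \inf_{y\in\ball(x,r^\nu)}\Big|\tfrac{1}{\nu}\nsum_{i=1}^\nu\nabla_x f(\bm{\xi}^i,x)-\Ex[\nabla_x f(\bm{\xi},y)]\Big|,
\]
and similarly with $y$ fixed at $x$ for \eqref{eq:coro:subd-b}. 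Taking the supremum over $x\in D^\nu\subset X$ and invoking \Cref{thm:hard-lip}(d) on the left-hand side yields \eqref{eq:coro:subd-a} and \eqref{eq:coro:subd-b} $\mathbb{P}$-almost surely. The main subtlety is the identification of $\Ex[\partial_x f(\bm{\xi},y)]$ with $\{\Ex[\nabla_x f(\bm{\xi},y)]\}$, which requires the $C^1$ guarantee provided by the definition of $D^\nu$; everything else is bookkeeping.
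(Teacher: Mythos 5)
Your proposal is correct and follows essentially the same route as the paper: parts (a)–(b) are the standard Clarke-subdifferential and measurability facts, and part (c) is obtained exactly as in the paper by noting that on $\ball(x,\delta^\nu)$ for $x\in D^\nu$ all the subdifferentials (and the Aumann expectation) collapse to gradient singletons, so the excesses reduce to the gradient gap bounded below by \Cref{thm:hard-lip}\ref{item:thm:lip-ineqs}. The only detail the paper adds is the remark that $\mathbb{P}$-completeness handles measurability of the exceptional event, which your argument implicitly covers since the bound holds on the full-measure set from \Cref{thm:hard-lip}.
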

\begin{proof}
	The outer semicontinuity, boundedness, and measurability are from standard arguments in the beginning of \cite[Section 3]{shapiro2007uniform}. For measurability, see also \cite[Lemma 4]{norkin1986stochastic}. Part \ref{item:coro:lip-c} follows from the facts that $\partial_x f(\xi, \cdot) = \{\nabla_x f(\xi, \cdot)\}$ on $\ball(x,\delta^\nu)$ for any $x \in D^\nu \subset X$ and \Cref{thm:hard-lip}.
\end{proof}

In \Cref{coro:lip}, we show that neither \eqref{eq:enlarged-a} nor \eqref{eq:enlarged-b} can hold when $r=0$, so \eqref{eq:r0ulln} is in general impossible for random Lipschitz functions. Consequently, this gives negative answers to the questions posed in \cite[Remark 2]{shapiro2007uniform}. Moreover, it shows that the lower bounds in \eqref{eq:coro:subd-a} and \eqref{eq:coro:subd-b} remain valid even for $r^\nu \downarrow 0$ shrinking sufficiently fast, so that a trivial ``smoothing'' technique cannot circumvent our negative result.

\begin{remark}\label{rmk}
It is worth noting that the restriction to the subset $D^\nu \subset X$ in \Cref{thm:hard-lip} yields applications far beyond those for the Clarke subdifferential in \Cref{coro:lip}. For any $\xi \in \Xi$, the function $f(\xi,\cdot)$ is actually smooth on $D^\nu$, and most standard notions of subdifferential coincide there; see, e.g., \cite[Chapter 8]{VaAn}. Even for the Clarke subdifferential, we may interchange the sum/expectation and the subdifferential operator, replacing $\tfrac{1}{\nu}\nsum_{i=1}^\nu \partial_x f(\bm{\xi}^i,x)$ by $\partial\big(\tfrac{1}{\nu}\nsum_{i=1}^\nu f(\bm{\xi}^i,\cdot)\big)(x)$ in \eqref{eq:coro:subd-a} and $\Ex[\partial_x f(\bm{\xi},x)]$ by $\partial \Ex [f(\bm{\xi},\cdot)](x)$ in \eqref{eq:coro:subd-b}, while the lower bounds there still hold. 
Consequently, the failure of uniform laws is not an artifact of the particular subdifferential employed, but reflects a more intrinsic obstruction that extends to a broad class of local approximation concepts.
\end{remark}	
	
	\subsection{Random Convex Functions}\label{sec:cvx}

\Cref{sec:lip} shows that the uniform laws in \eqref{eq:enlarged-a} and \eqref{eq:enlarged-b} with $r=0$ cannot hold for random Lipschitz functions. 
One might speculate that Lipschitz functions are somewhat ``wild'' and the situation would be better for other nonsmooth functions.
A natural class to examine next is that of random convex functions, since convexity is widely regarded as favorable. The following uniform law for $\epsilon$-subdifferentials is classical.

\begin{theorem}[cf.~{\cite[Proposition 3.4]{shapiro1996convergence} and \cite[Theorem 7.56]{shapiro2021lectures}}]\label{prop:cvx-eps} 
Let nonempty $X \subset \reals^d$ be compact, $\Xi \subset \reals^m$, and $\bm{\xi}, \bm{\xi}^1,\bm{\xi}^2,\ldots$ be $\Xi$-valued iid random variables on the complete probability space $(\Omega,\mathcal{F},\mathbb{P})$. Let $f:\Xi \times \reals^d \to \reals$ be a Carath\'eodory function such that $f(\xi,\cdot)$ is convex for any $\xi \in \Xi$. Assume that $|\Ex[f(\bm{\xi},x)]|<\infty$ for any $x \in \reals^d$. Then, for any fixed $\epsilon > 0$, $\mathbb{P}$-a.s., one has
\begin{equation}\label{eq:cvx-subd}
\lim_{\nu \to \infty} \sup_{x \in X} \setd\Big(\partial^\epsilon\Ex[f(\bm{\xi},\cdot)](x), \partial^\epsilon\big(\tfrac{1}{\nu}\nsum_{i=1}^\nu f(\bm{\xi}^i,\cdot)\big)(x)\Big)=0.
\end{equation}
\end{theorem}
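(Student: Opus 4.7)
The plan is to decompose the argument into two pieces: (i) a uniform law of large numbers for the random convex functions themselves on a neighborhood of $X$, and (ii) a quantitative stability estimate for the $\epsilon$-subdifferential under uniform perturbations that is valid only because $\epsilon > 0$. Writing $f_\nu=\tfrac{1}{\nu}\nsum_{i=1}^\nu f(\bm{\xi}^i,\cdot)$ and $f_0=\Ex[f(\bm{\xi},\cdot)]$, piece (i) gives $\sup_K|f_\nu-f_0|\to 0$ almost surely on suitable compacta $K$, and piece (ii) transfers this into Hausdorff convergence of $\partial^\epsilon f_\nu(x)$ to $\partial^\epsilon f_0(x)$ uniformly in $x \in X$. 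Intuitively, the positive slack $\epsilon$ is exactly what absorbs the function-level error $\sup_K|f_\nu - f_0|$, and this is the mechanism that breaks down for the ordinary subdifferential in \Cref{sec:lip}.

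For piece (i), I would invoke the strong law of large numbers pointwise at each $q$ in a countable dense set $Q\subset\reals^d$, producing $\mathbb{P}$-almost surely $f_\nu(q) \to f_0(q)$ simultaneously for all $q \in Q$. Since $f_0$ and the $f_\nu$ are finite convex functions on all of $\reals^d$, the classical fact that pointwise convergence of convex functions on a dense subset lifts to uniform convergence on compacta yields $\sup_{x \in K}|f_\nu(x)-f_0(x)|\to 0$ almost surely for any compact $K$. Fix $r > 0$ and apply this with $K = X + \ball(0, 2r)$. A second appeal to convexity produces an (eventual) common Lipschitz constant $L$ for $f_0$ and the $f_\nu$ on $X + \ball(0, r)$.

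For piece (ii), assume $\sup_K|f_\nu-f_0| \leq \delta$. From the definition of the $\epsilon$-subdifferential, any $s \in \partial^\epsilon f_\nu(x)$ satisfies, for every $y$,
\[
f_0(y) - f_0(x) \;\geq\; f_\nu(y) - f_\nu(x) - 2\delta \;\geq\; s^\top(y-x) - (\epsilon + 2\delta),
\]
so $s \in \partial^{\epsilon + 2\delta} f_0(x)$. Combining this with the standard continuity property of nested $\epsilon$-subdifferentials, $\partial^{\epsilon + 2\delta} f_0(x) \subset \partial^\epsilon f_0(x) + \ball(0,\psi(\delta))$ with $\psi(\delta) \to 0$ and a modulus $\psi$ depending only on $\epsilon$, $L$, and $r$, gives $\exs(\partial^\epsilon f_\nu(x); \partial^\epsilon f_0(x)) \leq \psi(\delta)$ uniformly in $x \in X$. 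Swapping the roles of $f_\nu$ and $f_0$ supplies the reverse excess, yielding Hausdorff convergence.

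The main obstacle is establishing the continuity modulus $\psi$ in piece (ii) uniformly over $x \in X$. The natural proof uses the representation of $\partial^\epsilon f_0(x)$ as an intersection of half-spaces defined by the shortfall $f_0(y) - f_0(x) - s^\top(y-x)$ and quantifies how fast this shortfall grows outside $\partial^\epsilon f_0(x) + \ball(0, \eta)$, using the uniform Lipschitz bound $L$ on $X + \ball(0, r)$ from piece (i). The key point — and the reason the theorem requires $\epsilon > 0$ — is that no such modulus exists at $\epsilon = 0$: this is precisely the discontinuity of $\partial f$ under uniform perturbations that the counterexamples later in this paper exploit.
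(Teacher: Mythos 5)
The paper itself does not prove \Cref{prop:cvx-eps}; it is quoted as a classical result from \cite[Proposition 3.4]{shapiro1996convergence} and \cite[Theorem 7.56]{shapiro2021lectures}, and your two-piece plan (a functional LLN for convex functions via pointwise SLLN on a countable dense set plus convexity, followed by a perturbation-stability estimate for $\partial^\epsilon$ that exploits $\epsilon>0$) is indeed the standard route. Piece (i) is fine, as is the ``nested'' step $\partial^{\epsilon+2\delta}\Ex[f(\bm{\xi},\cdot)](x)\subset\partial^{\epsilon}\Ex[f(\bm{\xi},\cdot)](x)+\ball(0,\psi(\delta))$, which follows from the support-function formula and the uniform Lipschitz bound near $X$.

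There is, however, a genuine gap in the first transfer step. The chain $f_0(y)-f_0(x)\ge f_\nu(y)-f_\nu(x)-2\delta\ge s^\top(y-x)-(\epsilon+2\delta)$ must hold for \emph{every} $y\in\reals^d$ to conclude $s\in\partial^{\epsilon+2\delta}f_0(x)$, but your piece (i) gives $|f_\nu-f_0|\le\delta$ only on the fixed compact $K=X+\ball(0,2r)$, and global uniform convergence genuinely fails even for $f(\xi,x)=\xi^\top x$, where $|f_\nu-f_0|$ grows linearly in $\|x\|$. Since $\partial^\epsilon$ is a global object (a point the paper itself emphasizes in \Cref{sec:cvx}), closeness on a fixed bounded set cannot determine it: if $h_1\equiv 0$ while $h_2$ vanishes on $\ball(0,R)$ and rises steeply outside, then $h_1=h_2$ on $\ball(0,R)$ (so $\delta=0$) yet $\partial^\epsilon h_1(0)=\{0\}$ and $\partial^\epsilon h_2(0)=\ball(0,\epsilon/R)$. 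With your fixed radius $r$ this leaves an irreducible error of order $\epsilon/r$, so no modulus $\psi(\delta)\to 0$ depending only on $\epsilon$, $L$, $r$ can exist, and the claimed excess bound fails as stated. The repair is standard but must be made explicit: work with the support function $\sigma(w)=\inf_{t>0}\,[h(x+tw)-h(x)+\epsilon]/t$, note that near-minimizing $t$ are bounded below by a constant of order $\epsilon/L$ (so the function-level error enters with factor $O(\delta/\epsilon)$, which is where $\epsilon>0$ is used) and that truncating the infimum to $t\le R$ costs at most $\epsilon/R$; then apply your piece (i) on $X+\ball(0,R)$ for each $R$, send $\nu\to\infty$ first and $R\to\infty$ second. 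Alternatively, a compactness/epi-convergence argument combined with the Hausdorff continuity of $(x,\epsilon)\mapsto\partial^\epsilon\Ex[f(\bm{\xi},\cdot)](x)$ closes the same gap.
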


\Cref{prop:cvx-eps} differs in several ways from the enlarged uniform laws in \eqref{eq:enlarged-a} and \eqref{eq:enlarged-b}. First, there is no Lipschitz assumption in \Cref{prop:cvx-eps}. Second, for a convex real-valued $h$ and a point $x$, unlike the $r$-enlargement $\cup_{y \in \ball(x,r)} \partial h(y)$, the $\epsilon$-subdifferential $\partial^\epsilon h(x)$ is intrinsically a \emph{global} notion by definition. Third, the mapping $x \mapsto \partial^\epsilon h(x)$ with $\epsilon > 0$ is continuous with respect to the Hausdorff metric, whereas the mapping $x \mapsto \cup_{y \in \ball(x,r)} \partial h(y)$ may fail to be. Moreover, as emphasized in \cite[p.~382]{shapiro2021lectures}, for $\xi^1,\ldots,\xi^\nu \in \Xi$ and a function $f:\Xi \times \reals^d \to \reals$ such that $f(\xi,\cdot)$ is convex for every $\xi$, the $\epsilon$-subdifferential sum rule
\[
\partial^\epsilon \big(\tfrac{1}{\nu}\nsum_{i=1}^\nu f(\xi^i,\cdot)\big)(x)=\tfrac{1}{\nu}\nsum_{i=1}^\nu \partial_x^\epsilon f(\xi^i,x),
\]
where $\partial_x^\epsilon f(\xi^i,x)=\partial^\epsilon (f(\xi^i,\cdot))(x)$, 
holds for $\epsilon = 0$ but can fail for $\epsilon > 0$ and $\nu > 1$.

Therefore, for uniform laws of the form \eqref{eq:enlarged-a} and/or \eqref{eq:enlarged-b} (also \eqref{eq:cvx-subd}) for random convex functions, it is natural to ask whether one can take $r = 0$ (equivalently, $\epsilon = 0$ in \eqref{eq:cvx-subd}). We next see that the answer to this question crucially depends on the dimension $d$ of $x$.

\subsubsection*{Univariate Case ($d=1$)} 
Since the negative result in \Cref{thm:hard-lip} for random Lipschitz functions is based on a univariate construction, for random convex functions we likewise first consider the univariate case.

\begin{proposition}[univariate random convex]\label{prop:cvx-1dim}
For a Borel measurable $\Xi\subset \reals^m$, let $\bm{\xi}, \bm{\xi}^1,\bm{\xi}^2,\ldots$ be $\Xi$-valued iid random variables on the complete probability space $(\Omega,\mathcal{F},\mathbb{P})$. Let $f:\Xi \times \reals \to \reals$ be a Carath\'eodory function such that $f(\xi,\cdot)$ is convex and $L(\xi)$-Lipschitz for any $\xi \in \Xi$ with Borel measurable $L$ and $\Ex[L(\bm{\xi})] < \infty$. Assume that $|\Ex[f(\bm{\xi},x)]|<\infty$ for any $x \in \reals$. Then, $\mathbb{P}$-a.s., one has
\[
\lim_{\nu \to \infty} \sup_{x \in \reals}\setd\Big(\Ex[\partial_x f(\bm{\xi},x)], \tfrac{1}{\nu}\nsum_{i=1}^\nu\partial_x f(\bm{\xi}^i,x)\Big)=0.
\]
\end{proposition}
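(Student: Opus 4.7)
Set $F(x)=\Ex[f(\bm{\xi},x)]$ and $F^\nu(x)=\tfrac{1}{\nu}\sum_{i=1}^\nu f(\bm{\xi}^i,x)$; both are finite-valued convex functions on $\reals$, and $F^\nu$ is Lipschitz with constant $\tfrac{1}{\nu}\sum_{i=1}^\nu L(\bm{\xi}^i)$ converging to $\Ex[L(\bm{\xi})]$ $\mathbb{P}$-a.s.\ by the strong LLN. The plan exploits that, in dimension one, the convex subdifferential collapses to the compact interval of one-sided derivatives, and the exact sum rule for zero-gap convex subdifferentials (which is precisely what fails for the $\epsilon$-subdifferential in \Cref{prop:cvx-eps}) gives $\tfrac{1}{\nu}\sum_{i=1}^\nu \partial_x f(\bm{\xi}^i,x)=\partial F^\nu(x)=[(F^\nu)'_-(x),(F^\nu)'_+(x)]$ and likewise $\partial F(x)=[F'_-(x),F'_+(x)]$. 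Since the Hausdorff distance between two compact intervals equals the maximum of the endpoint-differences, it suffices to establish
\[
\sup_{x\in\reals}\big|(F^\nu)'_+(x)-F'_+(x)\big|\to 0\quad\text{and}\quad \sup_{x\in\reals}\big|(F^\nu)'_-(x)-F'_-(x)\big|\to 0\quad\mathbb{P}\text{-a.s.},
\]
and I focus on the right-derivative case, the left being symmetric.

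Dominated convergence applied to the difference quotients $[f(\bm{\xi},x+h)-f(\bm{\xi},x)]/h$, bounded in modulus by $L(\bm{\xi})$ and convergent in $h\downarrow 0$ by convexity, yields the interchange $F'_+(x)=\Ex[(f(\bm{\xi},\cdot))'_+(x)]$. Hence for each fixed $x$ the strong LLN furnishes $(F^\nu)'_+(x)=\tfrac{1}{\nu}\sum_{i=1}^\nu (f(\bm{\xi}^i,\cdot))'_+(x)\to F'_+(x)$ $\mathbb{P}$-a.s. Setting $G:=F'_+$ and $G^\nu:=(F^\nu)'_+$, both are non-decreasing, right-continuous, and eventually uniformly bounded, so $G$ has at most countably many jumps. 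I pick a countable set $D\subset\reals$ that is dense in $\reals$, contains the jump set of $G$, and includes sequences tending to $\pm\infty$; applying the SLLN at every point of $D$ both to $(F^\nu)'_+$ and to the analogous left-derivative averages $(F^\nu)'_-$ yields a common null set outside of which $G^\nu(x)\to G(x)$ and $(F^\nu)'_-(x)\to F'_-(x)=G(x^-)$ for every $x\in D$.

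The core step is a P\'olya-type monotone uniformization. Since $G$ has total variation bounded by $2\Ex[L(\bm{\xi})]$, for any $\epsilon>0$ only finitely many of its jumps exceed $\epsilon$ in size. I extract a finite subgrid from $D$ containing all such large jumps and refined so that $G$ oscillates by at most $\epsilon$ between consecutive subgrid points. For any intermediate $x$, monotonicity of both $G$ and $G^\nu$ sandwiches the values $G^\nu(x),G(x)$ between left-limits and values at adjacent subgrid points, and the pointwise SLLN at the subgrid (covering both $G^\nu(x_i)\to G(x_i)$ and $G^\nu(x_i^-)=(F^\nu)'_-(x_i)\to G(x_i^-)$) collapses the sandwich to $\sup_x|G^\nu(x)-G(x)|=O(\epsilon)$ eventually, with tails handled by the SLLN along the $\pm\infty$ sequence and the same monotonicity. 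The main obstacle is that $G^\nu$ may exhibit extra sample-dependent jumps absent from $G$: this is absorbed because the jump of $G^\nu$ at any fixed grid point $y$ equals $\tfrac{1}{\nu}\sum_{i=1}^\nu\kappa(\bm{\xi}^i,y)$ with $\kappa(\xi,y)=(f(\xi,\cdot))'_+(y)-(f(\xi,\cdot))'_-(y)$, which converges by SLLN to the jump of $G$ at $y$, while the remaining off-grid fluctuations are contained by the uniformly controlled total variation.
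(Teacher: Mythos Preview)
Your reduction is the same as the paper's: rewrite the Hausdorff distance between the two subdifferential intervals as the maximum over $w\in\{-1,1\}$ of the uniform error in the one-sided derivative averages, use dominated convergence to interchange expectation with the directional derivative, and then exploit that $t\mapsto f_x'(\xi,t;w)$ is monotone with integrable envelope $L$. Where you diverge is in how you turn monotonicity into uniformity. The paper packages the argument as a finite $L^1(P)$-bracketing of the class $\mathcal{S}=\{f_x'(\cdot,x;w):x\in\reals\}$: it chooses grid points $t_1<\cdots<t_N$ so that $\Ex[f_x'(\bm{\xi},\cdot;w)]$ increases by at most $\epsilon$ across each cell, takes brackets $[\,f_x'(\cdot,t_n;w),\,-f_x'(\cdot,t_{n+1};-w)\,]$ (right derivative below, left derivative above), and then invokes the standard bracketing Glivenko--Cantelli theorem. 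Your route is a hands-on P\'olya argument: fix a countable $D$ containing all jumps of $G=F'_+$, apply the SLLN at every point of $D$ to both one-sided derivatives, and sandwich by monotonicity.

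Both are valid, and they are really two implementations of the same monotone-class idea. The bracketing version buys you cleanliness: jumps and tails are absorbed automatically by the $L^1$-bracket machinery, and measurability of the supremum comes for free from pointwise measurability of $\mathcal{S}$. Your version is more elementary (no empirical-process theory), but it carries more bookkeeping; in particular, your tail step is slightly incomplete as written. Pointwise SLLN along a sequence $d_m\to\infty$ gives $G^\nu(d_m)\to G(d_m)$ for each $m$, but to bound $G^\nu(x)-G(x)$ from above for $x>d_M$ you still need $u^\nu:=\lim_{x\to\infty}G^\nu(x)\to u:=\lim_{x\to\infty}G(x)$, which does not follow from convergence at each $d_m$ alone. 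The fix is immediate---apply the SLLN once more to the measurable, $L$-dominated limits $u(\xi)=\lim_{x\to\infty}(f(\xi,\cdot))'_+(x)$ and $\ell(\xi)=\lim_{x\to-\infty}(f(\xi,\cdot))'_+(x)$---but it should be stated.
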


In sharp contrast to the univariate random Lipschitz case, convexity allows us to take $r = 0$ in both \eqref{eq:enlarged-a} and \eqref{eq:enlarged-b}, even without any compactness assumption. The uniform law in \Cref{prop:cvx-1dim} holds on the entire real line. In spirit, it is similar to the classical Glivenko--Cantelli theorem and, even when restricted to a compact $X$, already covers cases that were previously unknown.
For illustration, let $X = [0,3]$, and let $\bm{\xi} = (\bm{\xi}_1,\bm{\xi}_2)$, where $\bm{\xi}_1$ and $\bm{\xi}_2$ are independent random variables, with $\bm{\xi}_1$ uniformly distributed on $[0,1]$ and $\bm{\xi}_2=2$ almost surely. Define $f(\xi,x)=\max\{x-\xi_1,0\}+\max\{ \xi_2  - x,0\}$. It is easy to see that $x \mapsto \Ex[f(\bm{\xi},x)]$ is not differentiable (it has a kink at $x = 2$), and a computation similar to \cite[Example 3.4]{norkin-wets} shows that the range of $\partial_x f$ is not separable. Moreover, one can show that $f$ cannot be written in the univariate convex-composite form considered in \cite[Section 4]{ruan2024subgradient}. Hence, a subdifferential uniform law for $f$ cannot be deduced from the results in \cite{shapiro1989asymptotic,shapiro2007uniform,teran2008uniform,xu2010uniform,ruan2024subgradient}. However, \Cref{prop:cvx-1dim} applies.

\subsubsection*{General Case ($d\geq 2$)}
Perhaps surprisingly, the situation changes drastically as soon as $d = 2$.

\begin{theorem}[random convex]\label{thm:hard-cvx-comp}
Let $X=\{0\}\times [0,1]\subset \reals^2$, and $\bm{\xi}, \bm{\xi}^1,\bm{\xi}^2,\ldots$ be iid random variables on the complete probability space $(\Omega,\mathcal{F},\mathbb{P})$, each uniformly distributed on $\Xi = [0,1]$.	
There exist a Carath\'eodory function $g:\Xi\times \reals^2 \to \reals$ and $\delta^\nu \in (0,1]\downarrow 0$ such that for 
\[
(\xi,x)\mapsto f(\xi,x)=\max\{g(\xi,x),0\}+35\|x\|^2,
\] the following hold.
\begin{enumerate}[label=\textnormal{(\alph*)}]
	\item For any $\xi \in \Xi$, $g(\xi,\cdot)$ is $70$-Lipschitz, $70$-smooth, and $f(\xi,\cdot)$ is convex, $140$-Lipschitz on $X$.
	\item For any $x \in \reals^2$, $|\Ex[f(\bm{\xi},x)]|<\infty$.
	\item For any $\nu \in \nats$, the set $D^\nu=\{x  \in X\mid f(\xi,\cdot)\text{ is }C^1\text{ on }\ball(x,\delta^\nu), \forall \xi \in \Xi\}$ is nonempty.
	\item $\mathbb{P}$-a.s., one has
	\[
\liminf_{\nu \to \infty} \sup_{x \in D^\nu} \inf_{y,\hat{y} \in \ball(x,\delta^\nu)} \Big\| \Ex[\nabla_x f(\bm{\xi},y)] -\tfrac{1}{\nu}\nsum_{i=1}^\nu\nabla_x f(\bm{\xi}^i,\hat{y}) \Big\| \geq \tfrac{1}{2}.
\] 
\end{enumerate} 
\end{theorem}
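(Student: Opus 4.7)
The plan is to lift the bump-and-bit construction underlying \Cref{thm:hard-lip} to a two-dimensional setting and exploit the structural freedom of the proposed form $f = \max\{g, 0\} + 35\|x\|^2$. The quadratic term plays two roles. First, it convexifies the sum via the identity $f = \max\{g + 35\|x\|^2,\, 35\|x\|^2\}$: since $70$-smoothness of $g$ gives $\nabla^2 g + 70 I \succeq 0$, both arguments of the $\max$ are convex, so $f$ is convex as a pointwise maximum of convex functions. Second, its gradient $70 x$ is deterministic and smooth, hence cancels in the difference $\Ex[\nabla_x f(\bm{\xi}, \cdot)] - \tfrac{1}{\nu}\nsum_{i=1}^\nu \nabla_x f(\bm{\xi}^i, \cdot)$, leaving the entire discrepancy to the switching term $\max\{g, 0\}$.

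Concretely, I would take $g(\xi, x) = 2 x_1 - r(\xi, x_2)$ with
\[
r(\xi, x_2) = \nsum_{k=1}^\infty \alpha_k \rho_k(x_2)\bigl(2\bit_k(\xi) - 1\bigr),
\]
where $\rho_k$ is a smooth nonnegative bump with $\rho_k(y_k) = 1$ supported in a shrinking neighborhood of $y_k = 1 - 2^{-k} \in (0,1)$ of half-width $\delta_k \sim 2^{-k-2}$ (so that the supports are pairwise disjoint), and $\alpha_k \sim \delta_k^2$ with an absolute constant. Disjointness of the bump supports reduces the $70$-Lipschitz and $70$-smoothness requirements on $g$ to per-index bounds $2 + \alpha_k/\delta_k \leq 70$ and $\alpha_k/\delta_k^2 \leq 70$, both satisfiable by construction, and measurability of $g(\cdot, x)$ follows from that of $\bit_k$. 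Convexity of $f(\xi,\cdot)$ and the Lipschitz bound $\|\nabla f\| \leq \|\nabla g\| + 70\|x\| \leq 140$ on $X$ are then immediate, and $|\Ex[f(\bm{\xi}, x)]| < \infty$ follows from $|g(\xi, x)| \leq 70\|x\|$.

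It remains to establish non-emptiness of $D^\nu$ and the $\tfrac12$-discrepancy $\mathbb{P}$-almost surely. On a ball $\ball((0, y_k), \delta^\nu)$ with $\delta^\nu \ll \alpha_k$, the sign of $g(\xi, \cdot)$ is dictated entirely by $\bit_k(\xi)$: when $\bit_k(\xi) = 0$, $r < 0$ and $g > 0$ on the ball, so $\partial_{x_1} f = 2 + 70 x_1$; when $\bit_k(\xi) = 1$, $g < 0$ and $\partial_{x_1} f = 70 x_1$. Hence $f(\xi, \cdot)$ is $C^1$ on the ball for every $\xi$, giving $(0, y_k) \in D^\nu$. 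For $y, \hat{y} \in \ball((0, y_k), \delta^\nu)$ one has $\Ex[\partial_{x_1} f(\bm{\xi}, y)] = 1 + 70 y_1$, while if every sample satisfies $\bit_k(\bm{\xi}^i) = 1$ then $\tfrac{1}{\nu}\nsum_{i=1}^\nu \partial_{x_1} f(\bm{\xi}^i, \hat y) = 70 \hat y_1$, producing a discrepancy of magnitude at least $1 - 140 \delta^\nu \geq \tfrac12$ for small enough $\delta^\nu$. A Borel--Cantelli argument on the $\nu \times K_\nu$ bit matrix with $K_\nu := \lceil \nu \cdot 2^\nu \rceil$ ensures such an index $k \leq K_\nu$ exists for all large $\nu$, $\mathbb{P}$-almost surely, since the probability of having no all-ones column is at most $(1 - 2^{-\nu})^{K_\nu} \leq e^{-\nu}$ and $\nsum_\nu e^{-\nu} < \infty$. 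The main technical obstacle is reconciling the three scales: $70$-smoothness forces $\alpha_k \lesssim \delta_k^2$, sign-stability of $g$ on $\ball((0, y_k), \delta^\nu)$ forces $\delta^\nu \ll \alpha_k$, and the bit-column argument forces $k$ to range up to $K_\nu \sim \nu 2^\nu$, which together require $\delta^\nu$ to decay doubly exponentially in $\nu$; these constraints are nevertheless mutually consistent and yield a valid counterexample.
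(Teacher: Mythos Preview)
Your proposal is correct and follows essentially the same approach as the paper: the paper likewise sets $g(\xi,x)=x_1+\sum_k \psi_k(x_2)(2\bit_k(\xi)-1)$ with disjoint $C^2$ bumps $\psi_k$ of height $\eta_k\sim r_k^2$, uses the $35\|x\|^2$ term exactly as you do to convexify $f=\max\{g,0\}+35\|x\|^2$, and applies the same Borel--Cantelli ``all-ones column'' argument to locate a random $p_k\in D^\nu$. The only differences are cosmetic---the paper centers the bumps at $1/k$ (polynomial spacing) rather than $1-2^{-k}$, takes $K^\nu=\lceil 2^{\nu+1}\log(\nu+1)\rceil$, and uses coefficient $1$ on $x_1$---which yields a merely exponentially (rather than doubly exponentially) decaying $\delta^\nu$, but the mechanism and all verifications are identical to yours.
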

 
In \Cref{thm:hard-cvx-comp}, we show that even in dimension $d=2$ there exists a random convex function $f:\Xi \times \reals^2 \to \reals$ for which a negative result analogous to \Cref{thm:hard-lip} holds. Therefore, in the same vein as \Cref{coro:lip}, one cannot take $r=0$ in \eqref{eq:enlarged-a} and \eqref{eq:enlarged-b}; equivalently, a uniform law with $\epsilon = 0$ in \eqref{eq:cvx-subd} cannot hold. 
The same lower bound extends to all $d\ge2$ by padding with zero coordinates, and to any compact superset of $X$. Thus, restricting to a lower-dimensional set $X$ strengthens, rather than weakens, the negative result.

However, \Cref{thm:hard-cvx-comp} says more than these, since the random convex $f$ there is highly structured. Let us make the following observations: 
\begin{itemize}
	\item In contrast to the construction used to prove \Cref{thm:hard-lip} (see \Cref{sec:proof-lip}), which  involves countably infinitely many pieces\footnote{Here, by the number of ``pieces,'' we mean the smallest number of connected regions partitioning the domain such that the function is smooth on the interior of each region.} (see \Cref{fig:lip}), the function $f$ in \Cref{thm:hard-cvx-comp} is a random convex function with only two pieces (see \Cref{fig:cvx}). Indeed, the only source of nonsmoothness in the construction is the function $t\mapsto \max\{t,0\}$. Moreover, by \cite[Theorem 7.52]{shapiro2021lectures}, we have 
	\[
	\lim_{\nu \to \infty} \sup_{x \in X} \Big\| \Ex[\nabla_x g(\bm{\xi},x)] - \tfrac{1}{\nu}\nsum_{i=1}^\nu\nabla_x g(\bm{\xi}^i,x) \Big\| = 0
	\]
	for the function $g$ emerging from \Cref{thm:hard-cvx-comp}, $\mathbb{P}$-a.s., with $X$ being any nonempty compact set.
	Thus, each smooth component appearing in the definition of $f$ satisfies a uniform law on its own. However, when they are combined through this (arguably) simplest nonsmooth operation $t \mapsto \max\{t,0\}$, the uniform law fails. This phenomenon is driven by the oscillating boundary between the connected sets $\{x \mid g(\xi,x) > 0\}$ and $\{x \mid g(\xi,x) < 0\}$, whose intersection with $X = \{0\} \times [0,1]$ gives rise to countably infinitely many kinks in the univariate function $t \mapsto \max\{g(\xi,(0,t)),0\}$, leading to an oscillating ``active set.''
	\item The negative results in \Cref{thm:hard-cvx-comp} hold for any function class that contains $f$. An interesting example is a subclass of the random convex-composite functions studied in \cite[Section 4]{ruan2024subgradient}. For $\kappa < \infty$, consider a convex, \emph{piecewise affine}, $\kappa$-Lipschitz  $h:\reals \to \reals$ and a Carath\'eodory  $c:\Xi \times \reals^d \to \reals$ such that $c(\xi,\cdot)$ is $\kappa$-Lipschitz and $\kappa$-smooth for each $\xi$. Since $\kappa$ is finite and $h$ is piecewise affine, the constants in \cite[Assumptions C.1--C.3]{ruan2024subgradient} are \emph{distribution-free}. By \cite[Theorem 5]{ruan2024subgradient}, for large $\nu$, with (outer) probability at least $1-\delta$, one has 
	\[
\sup_{x \in B}\setd\Big(\Ex[\partial_x(h\circ c)(\bm{\xi},x)], \tfrac{1}{\nu}\nsum_{i=1}^\nu\partial_x (h\circ c)(\bm{\xi}^i,x)\Big) \leq O\bigg(\sqrt{\frac{d + \textsf{VC}(\mathcal{C})\log \nu +\log(1/\delta)}{\nu} } \bigg),
\]
	where $B$ is an open Euclidean ball, $\mathcal{C}=\{\{\xi \mid c(\xi,x) \geq t\} \mid x \in \reals^d, t \in \reals \}$, and $\textsf{VC}(\mathcal{C})$ denotes the VC dimension of $\mathcal{C}$; see \cite[Section 2.6.1]{vanderVaartWellner.23}. This upper bound is also \emph{distribution-free}, but is non-trivial only if $\textsf{VC}(\mathcal{C}) < \infty$; in other words, the function class $\{c(\cdot,x) \mid x \in \reals^d\}$ is VC-major; see \cite[Section 2.6.4, Exercise 2.6.13]{vanderVaartWellner.23}. The VC-type assumptions are usually employed to obtain \emph{distribution-free} results \cite[Section 2.8.1]{vanderVaartWellner.23} and may not be necessary for universal Glivenko--Cantelli-type results, let alone $\mathbb{P}\circ\bm{\xi}^{-1}$-Glivenko--Cantelli-type results such as \cite[Theorem 7.52]{shapiro2021lectures}. Hence, it is natural to ask whether the VC-major assumption here can be dispensed with when uniformity over the underlying distributions and an explicit convergence rate are not required.
	Consider the convex piecewise affine $h = \max\{\cdot, 0\}$ and let $c=g$, where $g$ is as in \Cref{thm:hard-cvx-comp}. We can write the function $f$ in \Cref{thm:hard-cvx-comp} as
	\[
	f(\xi,x) = h(c(\xi,x)) + 35 \|x\|^2.
	\]
	Ignoring the deterministic term $x \mapsto 35\|x\|^2$, this is exactly a random convex-composite function of the above form with $\kappa=70$ but $\textsf{VC}(\mathcal{C})=\infty$.\footnote{To see it, let $\bit_k:[0,1]\to \{0,1\}$ and $g:\Xi\times \reals^2 \to \reals$ be defined in \Cref{sec:bit,sec:proof-cvx}.
When $x=(0,1/k)\in X$ and $t=0$, one has $\{\xi\mid g(\xi,x)\geq t\}=\{\xi\mid \bit_k(\xi)=1\}$. To show $\textsf{VC}(\mathcal{C})=\infty$, it suffices to show that, for any $n\in\nats$, there exist $\xi^1,\ldots,\xi^n\in \Xi$ such that, for any $b\in\{0,1\}^n$, one has $\bit_k(\xi^i)=b_i$ for each $i\in[n]$ and some $k\in\nats$. Let $\{b^k\mid k\in[2^n]\}=\{0,1\}^n$ cycle through all $2^n$ patterns. A valid construction is $\xi^i=\sum_{k=1}^{2^n} 2^{-k}b^k_i$ for each $i\in[n]$.} When $X$ in \Cref{thm:hard-cvx-comp} is a subset of $B$, we see that, even without requiring uniformity over distributions, the VC-major assumption cannot, in general, be dropped without causing the uniform law to fail.
	\item 
	Another example is median regression. Let $\bm{\xi} = (\bm{\xi}_1,\bm{\xi}_2)$, where $\bm{\xi}_1$ and $\bm{\xi}_2$ are $\Xi$-valued and $[-1,1]$-valued random variables, respectively, with $\Xi \subset \reals^m$. Let $\psi:\Xi \times \reals^d \to \reals$ be a Carath\'eodory function such that $\psi(\xi_1,\cdot)$ is Lipschitz and smooth for each $\xi_1 \in \Xi$. Median regression corresponds to the function  $f(\xi,x)=|\xi_2-\psi(\xi_1,x)|$. It is easy to see that the construction in \Cref{thm:hard-cvx-comp} can be rewritten in this form, since $f(\xi,x)=\max\{2\xi_2 - 2\psi(\xi_1,x),0\} - \xi_2 + \psi(\xi_1,x)$ with the last term being handled by \cite[Theorem 7.52]{shapiro2021lectures}. Therefore, in general, uniform laws fail for the subdifferential of $\Ex[f(\bm{\xi},\cdot)]$ when $d > 1$.

\end{itemize}

It is straightforward to verify that the conclusion of \Cref{coro:lip} still holds under the assumptions of \Cref{thm:hard-cvx-comp}, and we omit the details for brevity. We end this subsection with a negative result for $\epsilon$-subdifferentials, complementing the positive result in \Cref{prop:cvx-eps} and addressing the case of rapidly shrinking $\epsilon^\nu \downarrow 0$.
\begin{corollary}[$\epsilon$-subdifferential]\label{coro:cvx}
	Under the assumptions of \Cref{thm:hard-cvx-comp}, let $X$, $\Xi$, $\{\bm{\xi}, \bm{\xi}^\nu\}_\nu$, $f$, and $\{\delta^\nu\}_\nu$ be constructed there. For any sequence $\{\epsilon^\nu \in [0, (\delta^\nu)^2]\}_\nu$,  $\mathbb{P}$-a.s., one has
	\[
	\begin{aligned}
\liminf_{\nu \to \infty}\sup_{x \in X} \exs\Big(\partial^{\epsilon^\nu}\big(\tfrac{1}{\nu}\nsum_{i=1}^\nu  f(\bm{\xi}^i, \cdot)\big)(x); \partial^{\epsilon^\nu}\Ex[ f(\bm{\xi},\cdot)](x)\Big) \geq \tfrac{1}{2}, \\
\liminf_{\nu \to \infty}\sup_{x \in X} \exs\Big(\partial^{\epsilon^\nu}\Ex[ f(\bm{\xi},\cdot)](x); \partial^{\epsilon^\nu}\big(\tfrac{1}{\nu}\nsum_{i=1}^\nu f(\bm{\xi}^i, \cdot)\big)(x)\Big) \geq \tfrac{1}{2}.
\end{aligned}
\]

\end{corollary}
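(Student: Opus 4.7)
The plan is to reduce both excess bounds to the gradient-gap conclusion of \Cref{thm:hard-cvx-comp}(d) via a classical containment of the $\epsilon$-subdifferential of a smooth convex function by a ball around its gradient. Write $\bar f := \Ex[f(\bm{\xi},\cdot)]$ and $\hat f^\nu := \tfrac{1}{\nu}\sum_{i=1}^\nu f(\bm{\xi}^i,\cdot)$. For any $x\in D^\nu$, the $C^1$-ness of each $f(\bm{\xi}^i,\cdot)$ on $\ball(x,\delta^\nu)$ forces the active branch of $\max\{g(\bm{\xi}^i,\cdot),0\}$ to be locally a single one (either $g>0$ throughout the ball, or $g\le 0$ throughout). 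Combined with the $70$-smoothness of $g(\xi,\cdot)$ and of $35\|\cdot\|^2$, this provides a uniform constant $L\le 140$ for which both $\hat f^\nu$ and (by dominated convergence, via the uniform bound on $|\nabla_x f|$) $\bar f$ are convex and $L$-smooth on $\ball(x,\delta^\nu)$.

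The core technical lemma I would establish is elementary: if $h$ is convex and $L$-smooth on $\ball(x,r)$ with $2\epsilon\le Lr^2$, then $\partial^\epsilon h(x)\subseteq \ball(\nabla h(x),\sqrt{2L\epsilon})$. The proof is one line: for $s\in \partial^\epsilon h(x)$ and a unit vector $u$, plug $z=x+tu$ with $t=\sqrt{2\epsilon/L}\le r$ into the two inequalities $h(z)\ge h(x)+t\, s^\top u-\epsilon$ and $h(z)\le h(x)+t\,\nabla h(x)^\top u+Lt^2/2$, yielding $(s-\nabla h(x))^\top u\le \sqrt{2L\epsilon}$. Applying this to $\bar f$ and $\hat f^\nu$ at $x\in D^\nu$ with $r=\delta^\nu$ and $\epsilon=\epsilon^\nu\le (\delta^\nu)^2$ (the side condition $2\epsilon^\nu\le L(\delta^\nu)^2$ is automatic because $L\ge 2$) delivers
\[
\partial^{\epsilon^\nu}\bar f(x)\subseteq \ball\bigl(\nabla\bar f(x),\sqrt{2L\epsilon^\nu}\bigr),\qquad \partial^{\epsilon^\nu}\hat f^\nu(x)\subseteq \ball\bigl(\nabla\hat f^\nu(x),\sqrt{2L\epsilon^\nu}\bigr),
\]
with radii bounded by $\delta^\nu\sqrt{2L}\to 0$.

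To close, I specialize \Cref{thm:hard-cvx-comp}(d) to $y=\hat y=x$: $\mathbb{P}$-almost surely, for every $\eta>0$ and all sufficiently large $\nu$ there is $x^\nu\in D^\nu$ with $\|\nabla\bar f(x^\nu)-\nabla\hat f^\nu(x^\nu)\|\ge \tfrac12-\eta$. Since $\nabla\hat f^\nu(x^\nu)\in \partial\hat f^\nu(x^\nu)\subseteq \partial^{\epsilon^\nu}\hat f^\nu(x^\nu)$, the containment applied to $\bar f$ gives
\[
\dist\bigl(\nabla\hat f^\nu(x^\nu),\partial^{\epsilon^\nu}\bar f(x^\nu)\bigr)\ge \|\nabla\hat f^\nu(x^\nu)-\nabla\bar f(x^\nu)\|-\sqrt{2L\epsilon^\nu}\ge \tfrac12-\eta-o(1),
\]
from which the first excess inequality follows after taking $\sup_{x\in X}$, then $\liminf_\nu$, and finally $\eta\downarrow 0$; the second is symmetric, using $\nabla\bar f(x^\nu)\in \partial^{\epsilon^\nu}\bar f(x^\nu)$ together with the containment for $\hat f^\nu$. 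The only mildly delicate ingredient is the Step 1 upgrade from pointwise $C^1$-ness on $\ball(x,\delta^\nu)$ to a uniform smoothness constant $L$ independent of $\nu$ and of the realization, but this will be immediate from the explicit form of $g$ constructed in the proof of \Cref{thm:hard-cvx-comp}.
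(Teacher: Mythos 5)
Your proposal is correct, but it reaches the conclusion by a different key lemma than the paper. The paper's proof is a three-line application of the Br{\o}ndsted--Rockafellar theorem: any $g_1^\nu\in\partial^{\epsilon^\nu}\Ex[f(\bm{\xi},\cdot)](x^\nu)$ and $g_2^\nu\in\partial^{\epsilon^\nu}\big(\tfrac1\nu\sum_i f(\bm{\xi}^i,\cdot)\big)(x^\nu)$ are within $\sqrt{\epsilon^\nu}\le\delta^\nu$ of exact subgradients at points $y_1^\nu,y_2^\nu\in\ball(x^\nu,\delta^\nu)$, where $C^1$-ness on the ball (the defining property of $D^\nu$) identifies those exact subgradients with gradients; the conclusion then follows directly from the inner infimum over $y,\hat y\in\ball(x,\delta^\nu)$ built into \Cref{thm:hard-cvx-comp}(d), with no smoothness constant needed. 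You instead prove an elementary descent-lemma containment $\partial^\epsilon h(x)\subseteq\ball(\nabla h(x),\sqrt{2L\epsilon})$ for convex $h$ that is $L$-smooth on $\ball(x,r)$ with $2\epsilon\le Lr^2$, and then invoke \Cref{thm:hard-cvx-comp}(d) only at $y=\hat y=x^\nu$. This is self-contained (no Br{\o}ndsted--Rockafellar), but it costs you two extra verifications that the paper's route avoids: (i) the single-active-branch argument showing that $C^1$-ness of $f(\xi,\cdot)$ on the ball forces $f(\xi,\cdot)$ to coincide there with either $g(\xi,\cdot)+35\|\cdot\|^2$ or $35\|\cdot\|^2$ (valid, since the first component of $\nabla_x g$ is identically $1$, so any zero of $g$ in the interior would create a kink), yielding the uniform constant $L=140$; and (ii) the interchange $\nabla\Ex[f(\bm{\xi},\cdot)]=\Ex[\nabla_x f(\bm{\xi},\cdot)]$ on the ball so that both the averaged and expected functions inherit convexity and $L$-smoothness (the paper needs the analogous identification of $\partial\Ex[f(\bm{\xi},\cdot)](y_1^\nu)$ as well, so this point is shared). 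Your side condition $2\epsilon^\nu\le L(\delta^\nu)^2$ and the use of $\nabla\hat f^\nu(x^\nu)\in\partial^{\epsilon^\nu}\hat f^\nu(x^\nu)$ (and symmetrically $\nabla\bar f(x^\nu)$) to lower-bound each excess are both sound, so the argument goes through; it is simply longer and more construction-dependent than the paper's, whose reliance on the ball infimum in (d) is precisely what makes the Br{\o}ndsted--Rockafellar shortcut work.
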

\begin{proof}
For any $x^\nu \in D^\nu$ in \Cref{thm:hard-cvx-comp}, let $g_1^\nu \in \partial^{\epsilon^\nu}\Ex[ f(\bm{\xi},\cdot)](x^\nu)$ and $g_2^\nu \in \partial^{\epsilon^\nu}\big(\tfrac{1}{\nu}\nsum_{i=1}^\nu f(\bm{\xi}^i, \cdot)\big)(x^\nu)$ be arbitrary. By Br{\o}ndsted--Rockafellar \cite[p.~608]{brondsted1965subdifferentiability} and $\epsilon^\nu \in [0,(\delta^\nu)^2]$, we get $\|g_1^\nu - \Ex[\nabla_x f(\bm{\xi},y^\nu_1)]\| \leq \delta^\nu$ and $\|g_2^\nu - \tfrac{1}{\nu}\sum_{i=1}^\nu \nabla_x f(\bm{\xi}^i,y^\nu_2)\| \leq \delta^\nu$, where $y_1^\nu,y_2^\nu \in \ball(x^\nu, \delta^\nu)$. Then, $\|g_1^\nu - g_2^\nu\|\geq \|\Ex[\nabla_x f(\bm{\xi},y^\nu_1)] - \tfrac{1}{\nu}\sum_{i=1}^\nu \nabla_x f(\bm{\xi}^i,y^\nu_2)\| - 2\delta^\nu$.
This completes the proof by invoking \Cref{thm:hard-cvx-comp} and $\delta^\nu \downarrow 0$.
\end{proof}

\subsection{Discussion}\label{sec:dis}
The constructions used to prove \Cref{thm:hard-lip,thm:hard-cvx-comp} have implications beyond uniform LLNs.
In this subsection, we discuss several of these consequences, focusing on SAA consistency and subdifferential approximation; see \Cref{tab} for a summary of the results.

\begin{table}[]
\centering
\setlength{\arrayrulewidth}{0.8pt}
\begin{tabular}{c|c|c|c|c|c}
\textbf{functions} & \begin{tabular}[c]{@{}c@{}}$\Ex \partial$ and $\partial \Ex$\\ uniform\end{tabular} & \begin{tabular}[c]{@{}c@{}}$\Ex \partial$\\ graphical\end{tabular} & \begin{tabular}[c]{@{}c@{}}$\partial  \Ex$\\ graphical\end{tabular} & \begin{tabular}[c]{@{}c@{}}$\Ex \partial$\\ pointwise\end{tabular} & \begin{tabular}[c]{@{}c@{}}$\partial \Ex$\\ pointwise\end{tabular} \\ \hline
Lipschitz          & \cellcolor[HTML]{FFFFC7}\xmark{} \Cref{coro:lip}                                  &                                                                    & \cellcolor[HTML]{FFFFC7}\xmark{} \Cref{cor:hard-ghp}                &                                                                    & \cellcolor[HTML]{FFFFC7}\xmark{} \Cref{cor:hard-point}             \\ \hhline{-|-|~|-|~|-}
Lip.~\& regular    & \cellcolor[HTML]{FFFFC7}\xmark{} \Cref{coro:cvx}                                  & \multirow{-2}{*}{\cmark{} {\cite{shapiro2007uniform,norkin-wets}}}              & \cmark{} {\cite{shapiro2007uniform,norkin-wets}}                                & \multirow{-2}{*}{\cmark{} {\cite{artstein1975strong}}}              & \cmark{}  {\cite{artstein1975strong}}                              
\end{tabular}
\caption{Validity and failure of various laws of large numbers for subdifferentials. Uniform laws fail regardless of whether the expectation is taken before or after the subdifferential operator (see \Cref{rmk}), in contrast to graphical and pointwise laws; see \Cref{sec:dis}.}
\label{tab}
\end{table}

\paragraph{Consistency of SAA.}
The preceding subsections show that, for certain seemingly benign nonsmooth functions, a uniform LLN for subdifferentials cannot hold. This, however, does not preclude consistency of stationary points of the SAA objective, provided stationarity is interpreted in a weaker sense. Specifically, let a random vector $\bm{x}^\nu$ be a \emph{weak stationary} point of the SAA objective $\tfrac{1}{\nu}\sum_{i=1}^\nu f(\bm{\xi}^i,\cdot)$ $\mathbb{P}$-a.s.;  i.e., $0 \in \tfrac{1}{\nu}\sum_{i=1}^\nu \partial_x f(\bm{\xi}^i,\bm{x}^\nu)$. The qualifier ``weak'' reflects that, in general, 
\[
\partial \big(\tfrac{1}{\nu}\nsum_{i=1}^\nu f (\xi^i, \cdot)\big)(x)\subset \tfrac{1}{\nu}\nsum_{i=1}^\nu \partial_x f (\xi^i, x), \quad 
\partial \Ex[ f(\bm{\xi},\cdot)](x)\subset \Ex[\partial_x f(\bm{\xi},x)], 
\] 
 so every \emph{stationary} point (i.e., $x$ such that $0\in \partial (\tfrac{1}{\nu}\nsum_{i=1}^\nu f (\xi^i, \cdot))(x)$) is weak stationary, but not conversely. In the SAA setting of \Cref{sec:intro}, if weak stationary points $\bm{x}^\nu \to x$ $\mathbb{P}$-a.s., then $0 \in \Ex[\partial_x f(\bm{\xi},x)]$, i.e., $x$ is a weak stationary point of $\Ex[f(\bm{\xi},\cdot)]$; see, e.g., \cite[Theorem~4.2]{xu2010uniform}. 
 
 Weak stationarity is used by many in stochastic programming (see, e.g., \cite{xu2010uniform,teran2010consistency,burke2020subdifferential}), whereas  stationarity is seldom used without assuming subdifferential regularity; see, e.g., \cite{shapiro2007uniform,milz2025consistency}. One may ask whether this is merely a matter of technique or reflects a genuine obstruction. The following corollary of our construction in \Cref{sec:proof-lip} shows it is the latter.

\begin{corollary}[consistency]\label{cor:hard-ghp}
Under the assumptions of \Cref{thm:hard-lip}, and for $X$ and $\{\bm{\xi}, \bm{\xi}^\nu\}_\nu$ constructed there, there exist a Carath\'eodory function $h:\Xi\times \reals \to \reals$ and random points $\{\bm{x}^\nu:\Omega \to X\}_\nu$ such that the following hold.
\begin{enumerate}[label=\textnormal{(\alph*)}]
	\item $\mathbb{P}$-a.s., for all sufficiently large $\nu \in \nats$, $\bm{x}^\nu$ is a local minimizer of $\tfrac{1}{\nu}\nsum_{i=1}^\nu   h(\bm{\xi}^i,\cdot)$.
	\item $\mathbb{P}$-a.s., one has $\bm{x}^\nu \to x \in X$, but $0 \not\in \partial \Ex[h(\bm{\xi},\cdot)](x)$.
\end{enumerate}
\end{corollary}

\Cref{cor:hard-ghp} shows that, as $\nu \to \infty$, cluster points of local minimizers of the SAA objective need not even be stationary. This strengthens the well-known observation that local optimality itself may fail to be preserved under SAA limits; see \cite[(4.6)]{bastin2006convergence}. By the Fermat rule, a local minimizer is stationary. Therefore, \Cref{cor:hard-ghp} highlights the breakdown of a \emph{graphical} LLN for subdifferentials when one considers $\partial \Ex[f(\bm{\xi},\cdot)]$ rather than $\Ex[\partial_x f(\bm{\xi},\cdot)]$; cf.~\cite[Example 4.4]{norkin-wets}. 

\paragraph{Subdifferential Approximation.}
The subdifferential of an expectation function can be approximated in several ways, e.g., via smoothing \cite{burke2020subdifferential}. Here we focus on the SAA approach (see, e.g., \cite{birge1995subdifferential}), which is also tied to the Artstein--Vitale LLN for set-valued mappings \cite{artstein1975strong}. 
Compared with the uniform and graphical laws discussed above, the Artstein--Vitale LLN can be viewed as a \emph{pointwise} law. In the SAA setting of \Cref{sec:intro}, it implies (see also \cite[Theorem~7.53]{shapiro2021lectures}) that,
\[ \lim_{\nu \to \infty} \setd \Big(\tfrac{1}{\nu}\nsum_{i=1}^\nu \partial_x f(\bm{\xi}^i, x), \Ex[ \partial_x f(\bm{\xi}, x)] \Big) =0. 
\]
As in the graphical laws of \cite{shapiro2007uniform,norkin-wets}, the above approximation is in the \emph{weak} sense: it controls $\tfrac{1}{\nu}\sum_{i=1}^\nu \partial_x f(\bm{\xi}^i,x)$ versus $\Ex[\partial_x f(\bm{\xi},x)]$, rather than $\partial(\tfrac{1}{\nu}\sum_{i=1}^\nu f(\bm{\xi}^i,\cdot))(x)$ versus $\partial \Ex[f(\bm{\xi},\cdot)](x)$. It is thus natural to ask whether the latter type of convergence can ever hold in general. The following corollary of our construction in \Cref{sec:proof-lip} gives a negative answer.
\begin{corollary}[subdifferential approximation]\label{cor:hard-point}
Under the assumptions of \Cref{thm:hard-lip}, and for $X$,  $\{\bm{\xi}, \bm{\xi}^\nu\}_\nu$, and $f$ constructed there, there exists $x \in X$ such that $\mathbb{P}$-a.s., one has 
\[
\liminf_{\nu \to \infty} \exs \Big(\partial \big(\tfrac{1}{\nu}\nsum_{i=1}^\nu f(\bm{\xi}^i, \cdot) \big)(x);
\partial \Ex[f(\bm{\xi}, \cdot)](x) \Big) \geq \tfrac{1}{2}.
\]
\end{corollary}

Notably, the special pointwise case treated in \Cref{cor:hard-point} already suffices to rule out a variant of \eqref{eq:r0ulln} when the expectation is taken \emph{inside} the subdifferential operator. It does not, however, rule out the version with the expectation \emph{outside}, since the corresponding pointwise statement always holds by the Artstein--Vitale LLN. By contrast, \Cref{thm:hard-lip} shows that the uniform law \eqref{eq:r0ulln} fails regardless of whether the expectation is placed inside or outside the subdifferential; see \Cref{rmk}.
 
\paragraph{DC Construction.}
Analogues of \Cref{cor:hard-ghp,cor:hard-point} also hold for a difference-of-convex (DC) variant of the construction used to prove \Cref{thm:hard-cvx-comp}; see \Cref{sec:dc} for details. 
Hence, restricting attention to DC functions with finitely many smooth pieces does not circumvent our negative results.
In particular, the \emph{sufficient separation condition} cannot be removed in \cite[Corollary 5.1]{qi2022asymptotic} while preserving consistency of \emph{d-stationary} points.

\section{Proofs}\label{sec:proof}
We now present the proofs of our main results, beginning with a random bits gadget.
\subsection{A Probabilistic Gadget}\label{sec:bit}
For any $\xi \in [0,1]$ and  $k \in \{0,1,\ldots\}$, let 
\[
\bit_k(\xi) = \lfloor 2^k \xi \rfloor - 2\lfloor 2^{k-1}\xi \rfloor  \in \{0,1\},
\]
which can be understood as the $k$th binary digit of  $\xi$ using the terminating-zeros convention for dyadic rationals.  For $\xi \in [0,1]$, we have the binary expansion
	$
	\xi = \nsum_{k=0}^\infty 2^{-k}\bit_k(\xi).
	$
	
Let $\bm{\xi}^1,\bm{\xi}^2,\ldots$ be iid random variables on the complete probability space $(\Omega,\mathcal{F},\mathbb{P})$, each with  uniform distribution on $\Xi = [0,1]$.
	It is classical that the random variables $\{\bit_k(\bm{\xi}^i)\}_{i,k \in \nats}$ are iid Bernoulli random variables with parameter $\tfrac{1}{2}$; see, e.g., \cite[Section 4.6]{williams1991probability} and \cite[Lemma 2.20]{kallenberg2021foundations}.

\begin{lemma}\label{prop:bit-measure}
For any $k \in \nats$, the function $\xi\mapsto\bit_k(\xi)$ is $\mathcal{B}(\Xi)$-measurable. 	
\end{lemma}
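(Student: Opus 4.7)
The plan is to reduce the measurability of $\bit_k$ to the measurability of the floor function, which is standard. Specifically, I would first observe that $\bit_k = \varphi_k - 2\varphi_{k-1}$, where $\varphi_j(\xi) = \lfloor 2^j \xi \rfloor$ for $j \in \nats \cup \{0\}$, so it suffices to prove that each $\varphi_j$ is $\mathcal{B}(\Xi)$-measurable, since finite linear combinations of Borel measurable functions are Borel measurable.

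Next I would verify the measurability of the floor function $t \mapsto \lfloor t \rfloor$ on $\reals$. The range is the countable set $\mathbb{Z}$, and for every $n \in \mathbb{Z}$ the preimage $\{t \in \reals \mid \lfloor t \rfloor = n\} = [n, n+1)$ is Borel. Hence the preimage of any subset of $\mathbb{Z}$ is a countable union of half-open intervals, which is Borel, so $\lfloor \cdot \rfloor$ is $\mathcal{B}(\reals)/\mathcal{B}(\reals)$-measurable. Since $\xi \mapsto 2^j \xi$ is continuous (hence Borel measurable), $\varphi_j = \lfloor \cdot \rfloor \circ (2^j \, \cdot)$ is Borel measurable as a composition, for every $j \in \nats \cup \{0\}$. (Here $\varphi_0(\xi) = \lfloor \xi \rfloor$ is trivially zero on $\Xi = [0,1)$ and equals $1$ at $\xi = 1$, but we do not need this to conclude measurability.)

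Combining these two observations yields that $\bit_k = \varphi_k - 2\varphi_{k-1}$ is $\mathcal{B}(\Xi)$-measurable for every $k \in \nats$. There is no real obstacle here: the argument is entirely routine and relies only on elementary facts about Borel measurability of floors and of compositions with continuous maps. I would therefore present it in a single short paragraph in the actual proof.
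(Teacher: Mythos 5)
Your argument is correct and is exactly the paper's approach, merely spelled out: the paper's one-line proof also reduces the claim to Borel measurability of $t \mapsto \lfloor t \rfloor$, which you verify via its integer level sets and compose with the continuous scaling $\xi \mapsto 2^j\xi$. No gap; your version just makes the routine details explicit.
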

\begin{proof}
Simply observe that $t \mapsto \lfloor t \rfloor$ is $\mathcal{B}(\Xi)$-measurable.
\end{proof}

\begin{lemma}\label{prop:E}
There exist an $\mathcal{F}$-measurable set $E\subset \Omega$ with $\mathbb{P}(E)=1$ and $\nats$-valued random variables $\bar{\bm{\nu}},\bm{k}^1, \bm{k}^2,\ldots$ such that for any $\omega \in E$ and any $\nu \geq \bar{\bm{\nu}}(\omega)$, one has
\[
\nu \leq {\bm{k}^\nu(\omega)} \leq \lceil 2^{\nu+1}\ln(\nu+1)\rceil \quad\text{and}\quad
\bit_{{\bm{k}^\nu(\omega)}}(\bm{\xi}^1(\omega))=\cdots = \bit_{{\bm{k}^\nu(\omega)}}(\bm{\xi}^\nu(\omega)) = 1.
\] 
\end{lemma}
\begin{proof}
	Let $K^\nu=\lceil 2^{\nu+1}\ln(\nu+1)\rceil$. For any $\nu,k \in \nats$, define sets
	\[
		E^\nu_k =\bigcap\nolimits_{i \in [\nu]}\{\bit_k(\bm{\xi}^i)=1 \},\qquad E=\bigcup\nolimits_{\bar{\nu} \geq 1} \bigcap\nolimits_{\nu \geq \bar{\nu}}\bigcup\nolimits_{k=\nu}^{K^\nu} E^\nu_k,
	\] 
	which are $\mathcal{F}$-measurable from \Cref{prop:bit-measure}.
	By independence of $\{\bit_k(\bm{\xi}^i)\}_{i\in[\nu],k\in\nats}$, we have
	\[
	\begin{aligned}
	\mathbb{P}\Big(\bigcap\nolimits_{k=\nu}^{K^\nu} (E^\nu_k)^\compl\Big)&=\prod\nolimits_{k=\nu}^{K^\nu} \mathbb{P}((E^\nu_k)^\compl)= \prod\nolimits_{k=\nu}^{K^\nu} \Big(1-\prod\nolimits_{i =1}^\nu \mathbb{P}(\{\bit_k(\bm{\xi}^i)=1\}) \Big)\\
&= (1-2^{-\nu})^{K^\nu-\nu+1} \leq e^{-2^{-\nu}(K^\nu-\nu+1)} \leq \tfrac{\sqrt{e}}{(\nu+1)^2}.
	\end{aligned}
	\]
	Then, $\sum_{\nu=1}^\infty \mathbb{P}(\cap_{k =\nu}^{K^\nu} (E^\nu_k)^\compl) < \infty$. By Borel--Cantelli, we have 
	\[
	\mathbb{P}\Big(\bigcap\nolimits_{\bar{\nu} \geq 1} \bigcup\nolimits_{\nu \geq \bar{\nu}}\bigcap\nolimits_{k=\nu}^{K^\nu} (E^\nu_k)^\compl\Big)=\mathbb{P}(E^\compl)=0.
	\]
	For any $\nu \in \nats$, define $\bm{k}^\nu(\omega) = \nu$ if $\omega \notin \cup_{k=\nu}^{K^\nu} E_k^\nu$ and $\bar{\bm{\nu}}(\omega) = 1$ if $\omega \notin E$. 
	Otherwise, set
	\[
	\bm{k}^\nu(\omega) =
\min\{k \mid \nu \leq k \leq K^\nu, \omega \in  E_k^\nu\},
\quad
\bar{\bm{\nu}}(\omega) =
\min\{\bar{\nu} \geq 1 \mid \omega \in \cap_{\nu \geq \bar{\nu}} \cup_{k=\nu}^{K^\nu} E_k^\nu\},
	\]
which are $\mathcal{F}$-measurable and complete the proof by construction.
\end{proof}

\subsection{Proof of \Cref{thm:hard-lip}}\label{sec:proof-lip}

\paragraph{Intuition.} 
Our proof is inspired by a construction that refutes one-sided uniform consistency of epigraphical LLNs; see \cite[Example 4.1]{artstein1995consistency}. The key idea is to accumulate countably infinitely  many affine pieces near the origin, with the slope of each piece determined by a random bit, $\bit_k(\bm{\xi}^i) \in \{0,1\}$. 
Then, using \Cref{prop:E}, we can almost surely find a random point in $D^\nu$ at which  the functions $\{f(\bm{\xi}^i,\cdot)\mid i \in [\nu]\}$ are all $C^1$ with gradient consistently equal to $1$, thereby exhibiting a constant gap between the empirical and the average cases. Our proof is divided into five steps.

\begin{figure}[t]
	\centering
  \includegraphics[width=0.59\textwidth]{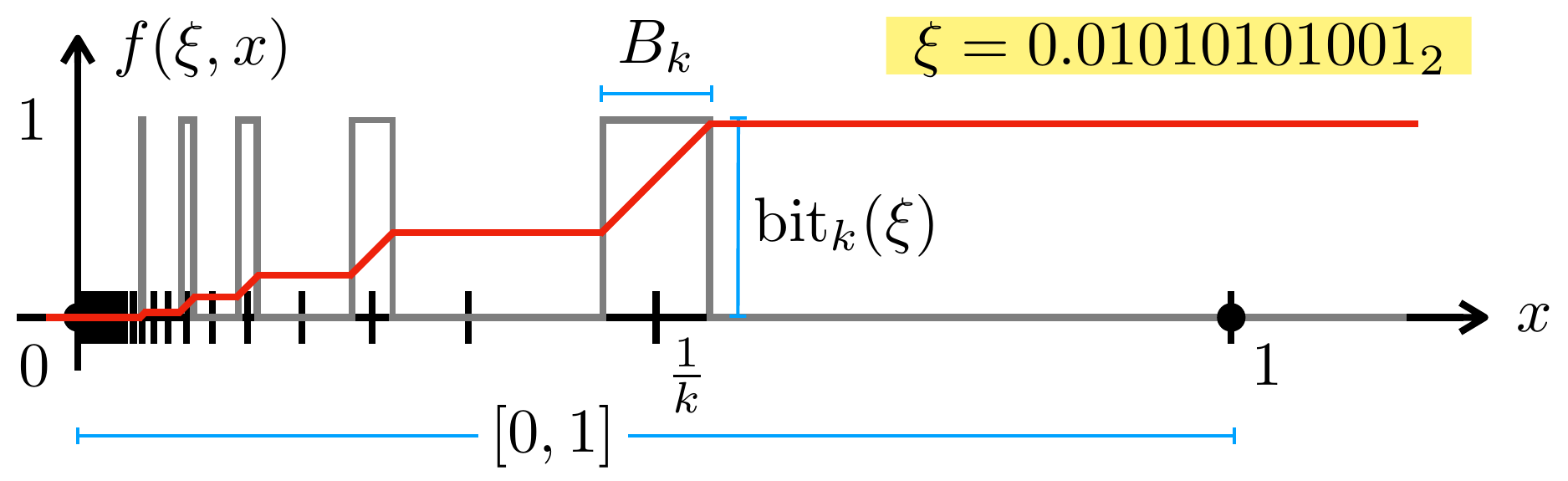}
  \caption{The function $f(\xi,\cdot)$ in \Cref{sec:proof-lip} when $\xi=0.01010101001_2$.}\label{fig:lip}
\end{figure}

\paragraph{Step 1.}
Define open sets $B_k=\{x \in \reals \mid |x-\tfrac{1}{k}|< r_k\}$ with $r_k=\frac{1}{4k^2}$ for $k \in \nats$.
	Let 
	\[
	g(\xi,x)=
\nsum_{k=1}^\infty \mathds{1}_{B_k}(x)\bit_k(\xi).
	\]
	Since $\tfrac{1}{2}(\frac{1}{k}-\frac{1}{k+1})\geq \frac{1}{4k^2}=r_k$, the sets $\{B_k\}_{k}$ are disjoint, so that  $g(\xi,x) \in \{0,1\}$.
	Let
	\[
	f(\xi,x) = \int_0^{\max\{x,0\}} g(\xi,t) \dd t,
	\]
	which is well-defined as explained below; see \Cref{fig:lip}.
		Define $\delta^\nu=(8\lceil 2^{\nu+1}\ln(\nu+1)\rceil^2)^{-1}.$
\paragraph{Step 2.} The $\{0,1\}$-valued function $g(\xi,\cdot)$ is a sum of measurable indicator functions, then also measurable.
Since $g(\xi,\cdot)$ is bounded and has only countably many points of discontinuity, the Lebesgue criterion implies that $g(\xi,\cdot)$ is Riemann integrable on compact sets, and hence $f$ is well defined.
For any $x,y \in \reals$, one has 
$|f(\xi,x) - f(\xi,y)|\leq |\int_{\min\{x,y\}}^{\max\{x,y\}} g(\xi,t)\dd t|\leq |x-y|$, hence $f(\xi, \cdot)$ is $1$-Lipschitz continuous.
From \Cref{prop:bit-measure}, the function $\xi \mapsto \bit_k(\xi)$ is measurable. By Tonelli, we have $f(\xi,x)=\sum_{k=1}^\infty \bit_k(\xi)\int_0^{\max\{x,0\}}\mathds{1}_{B_k}(t)\dd t<\infty$, hence $f(\cdot,x)$ is also measurable.
	Therefore, $f$ is Carath\'eodory. Moreover, for any $x \in \reals$, $|\Ex[f(\bm{\xi},x)]| \leq \int_0^{\max\{x,0\}}1\dd t<\infty$.
	
	\paragraph{Step 3.}

	Let $p_k = \tfrac{1}{k}$ and $\Delta_k=\frac{r_k}{2}=\tfrac{1}{8k^2}$. For any $k \in \nats$, $\xi \in \Xi$, and $y \in \ball(p_k, \Delta_k)\subset B_k$, one has  
	\[
	g(\xi,y)=\bit_k(\xi), \qquad f(\xi,y)=\bit_k(\xi)(y - p_k+\Delta_k)+\int_0^{p_k - \Delta_k} g(\xi,t)\dd t .
	\]
	Hence, the function 
	$f(\xi,\cdot)$ is smooth on $\cup_k\ball(p_k,\Delta_k)$. Moreover, for any $k \in \nats$ and $y \in \ball(p_k,\Delta_k)$, we can write $
	\nabla_xf(\xi,y) = g(\xi,y)= \bit_k(\xi)$.
	
\paragraph{Step 4.} 
Let $\mathcal{F}$-measurable $E\subset \Omega$ with $\mathbb{P}(E)=1$, $\bar{\bm{\nu}}$, and $\{\bm{k}^\nu\}_\nu$ be defined in \Cref{prop:E}.
	For any $\omega \in E$ and $\nu \geq \bar{\bm{\nu}}(\omega)$, one has $\bit_{\bm{k}^\nu(\omega)}(\bm{\xi}^i(\omega))=1$ for all $i \in [\nu]$ and some $\nu \leq \bm{k}^\nu(\omega) \leq \lceil 2^{\nu+1}\ln(\nu+1)\rceil$. 
	 Let $\bm{p}^\nu(\omega)=p_{\bm{k}^\nu(\omega)} \in \{p_k\}_{k }$. For any $y \in \ball(\bm{p}^\nu(\omega), \Delta_{\bm{k}^\nu(\omega)})$, we have
	\[
	\tfrac{1}{\nu}\nsum_{i=1}^\nu\nabla_x f(\bm{\xi}^i(\omega),y)= \tfrac{1}{\nu}\nsum_{i=1}^\nu \bit_{\bm{k}^\nu(\omega)}(\bm{\xi}^i(\omega))= 1.
	\]
	Meanwhile, for any $k \in \nats$ and $y \in \ball(p_k, \Delta_k)$, deterministically, we have 
	\[
	\Ex[\nabla_xf(\bm{\xi},y)] = \Ex_{\bm{\xi}}[\bit_k(\bm{\xi})] = \tfrac{1}{2}.
	\]
\paragraph{Step 5.} Note that 
\[
0\leq \delta^\nu
=
\big(8\lceil 2^{\nu+1}\ln(\nu+1)\rceil^2\big)^{-1}
= \Delta_{\lceil 2^{\nu+1}\ln(\nu+1)\rceil}
\leq 
\Delta_{\bm{k}^\nu(\omega)}.
\]
Therefore, for any $\omega \in E$,  we conclude that $\bm{p}^\nu(\omega) \in \{p_k \mid \nu \leq k \leq \lceil 2^{\nu+1}\ln(\nu+1)\rceil\}\subset D^\nu$ and
\[
\begin{aligned}
	&\liminf_{\nu \to \infty} \sup_{x \in D^\nu} \inf_{y,\hat{y} \in \ball(x,\delta^\nu)} \Big| \Ex[\nabla_x f(\bm{\xi},y)] -\tfrac{1}{\nu}\nsum_{i=1}^\nu\nabla_x f(\bm{\xi}^i(\omega),\hat{y}) \Big| \\
	\geq\;& \inf_{\nu \geq \bar{\bm{\nu}}(\omega)}  \inf_{y,\hat{y} \in \ball(\bm{p}^\nu(\omega),\delta^\nu)} \Big| \Ex[\nabla_x f(\bm{\xi},y)] -\tfrac{1}{\nu}\nsum_{i=1}^\nu\nabla_x f(\bm{\xi}^i(\omega),\hat{y}) \Big| =|\tfrac{1}{2} - 1| = \tfrac{1}{2},
\end{aligned}
\]
which completes the proof, since $(\Omega,\mathcal{F})$ is $\mathbb{P}$-complete.

\subsection{Proof of \Cref{prop:cvx-1dim}}
	
	Since $f(\xi, \cdot)$ is real-valued and convex, by \cite[Theorem 23.1]{Rockafellar.70}, the directional derivative of $f(\xi, \cdot)$ exists at any point $x$ and in every direction $w$, and we denote it by $f_x'(\xi, x; w)$. 
	Using \cite[Theorem 7.46]{shapiro2021lectures}, the function $x\mapsto\Ex[f(\bm{\xi},x)]$ is convex and directionally differentiable with Borel measurable $f_x'(\cdot, x;w)$ and $
	(\Ex[f(\bm{\xi}, \cdot)])'(x;w)=\Ex[f_x'(\bm{\xi}, x;w)].
	$
	By \cite[Theorem 2.7.2]{clarke1990optimization}, one has $\Ex[\partial_x f(\bm{\xi},x)]=\partial\Ex[ f(\bm{\xi},\cdot)](x)$.
	From \cite[Theorems V.3.3.8, V.3.3.3]{hiriart2013convex}, \cite[Theorem 23.2]{Rockafellar.70}, and the non-emptiness of compact $\partial_x f(\xi,x)$, we can write the Hausdorff distance as follows
	\[
	\setd\Big(\Ex[\partial_x f(\bm{\xi},x)], \tfrac{1}{\nu}\nsum_{i=1}^\nu\partial_x f(\bm{\xi}^i,x)\Big)=\sup_{|w|=1} \Big|\Ex[f_x'(\bm{\xi}, x;w)] - \tfrac{1}{\nu}\nsum_{i=1}^\nu f_x'(\bm{\xi}^i,x;w)\Big|.
	\]
	Let $P=\mathbb{P}\circ\bm{\xi}^{-1}$.
	It suffices to show that, for any fixed $w \in \{-1,1\}$, the class of Borel measurable functions $\mathcal{S}=\{f_x'(\cdot, x;w) \mid x \in \reals\}$
	is $P$-Glivenko--Cantelli; i.e.,
	\begin{equation}\label{proof:prop1-gc}
	\mathbb{P}\left(\lim_{\nu \to \infty}\sup_{x \in \reals} \Big|\Ex[f_x'(\bm{\xi}, x;w)] - \tfrac{1}{\nu}\nsum_{i=1}^\nu f_x'(\bm{\xi}^i,x;w)\Big| = 0 \right) = 1,
	\end{equation}
	which is well defined as explained below.
	For simplicity, assume $w = 1$. Using the fact that $f(\xi,\cdot)$ is $L(\xi)$-Lipschitz, we have $|f_x'(\xi, x; w)| \leq L(\xi)$ for any $x$. Because $\Ex[L(\bm{\xi})] < \infty$, the class $\mathcal{S}$ admits an integrable envelope $L$. 
	From \cite[Theorem 7.42]{shapiro2021lectures} and \cite[Theorem 24.1]{Rockafellar.70}, the function $t\mapsto f_x'(\xi,t; w)$ is non-decreasing and right-continuous (when $w=-1$, it is non-increasing and left-continuous), thereby $\mathcal{S}$ is pointwise measurable \cite[Example 2.3.4]{vanderVaartWellner.23} and hence $P$-measurable \cite[Definition 2.3.3]{vanderVaartWellner.23} with \eqref{proof:prop1-gc} being well defined. With monotonicity and boundedness of $f_x'(\xi,\cdot; w)$, set
	\[
	l(\xi) = \lim_{k\to \infty} f_x'(\xi,-k; w),\quad u(\xi)= \lim_{k\to \infty}f_x'(\xi,k; w),
	\]
	where $l,u:\Xi \to \reals$ are Borel measurable with $\Ex[u(\bm{\xi})]-\Ex[l(\bm{\xi})] \leq 2\Ex[L(\bm{\xi})]$.
	If $\Ex[u(\bm{\xi})]=\Ex[l(\bm{\xi})]$, then $f_x'(\xi,\cdot;w)$ is almost surely constant and the claim is trivial.
	Assume $\Ex[(u-l)(\bm{\xi})]>0$, and fix any $0 < \epsilon < \tfrac{2}{3}\Ex[(u-l)(\bm{\xi})]$. 
	For each $n \in \nats$, define
		\[
	t_{n} = \min \big\{t\in \reals \bigm|\Ex[f_x'(\bm{\xi},t; w)] \geq \min\{\Ex[u(\bm{\xi})]-\tfrac{\epsilon}{2}, \Ex[l(\bm{\xi})] + n\epsilon\}\big\},
	\]
	where the attainment is from the right-continuity of $t\mapsto\Ex[f_x'(\bm{\xi},t; w)]$.
	Let $N\in\nats$ be the first number such that $\Ex[f_x'(\bm{\xi},t_N; w)] \geq \Ex[u(\bm{\xi})]-\tfrac{\epsilon}{2}$.
	By construction, we have $N \leq 1+2\Ex[L(\bm{\xi})]/\epsilon < \infty$ and $\Ex[|u(\bm{\xi}) - f_x'(\bm{\xi},t_N; w)|] \leq \epsilon$.
	By \cite[Theorem 24.1]{Rockafellar.70}, we get $-f_x'(\xi,t_{n}; -w)=\lim_{s \uparrow t_{n}}f_x'(\xi,s; w)$. Using dominated convergence theorem and the definition of $t_n$, for any $n \in \nats$, one has
	\[
	\Ex[-f_x'(\bm{\xi},t_{n}; -w)]=\lim_{s \uparrow t_{n}} \Ex[f_x'(\bm{\xi},s; w)]\leq \Ex[l(\bm{\xi})] + n\epsilon,
	\]
	which yields that $\Ex[|-f_x'(\bm{\xi},t_{n+1}; -w)-f_x'(\bm{\xi},t_{n}; w) |] \leq \epsilon$ whenever $t_{n+1}\neq t_n$ and $\Ex[|-f_x'(\bm{\xi},t_1; -w)-l(\bm{\xi})|] \leq \epsilon$.
	Then, the following (possibly empty when $t_n=t_{n+1}$) sets of functions
	\[
	\begin{aligned}
	\big[l, -f_x'(\cdot,t_{1}; -w) \big] &= \{\phi \in \mathcal{S} \mid l \leq \phi \leq -f_x'(\cdot,t_{1}; -w)\}, \\
	\big[f_x'(\cdot,t_{n}; w), -f_x'(\cdot,t_{n+1}; -w) \big] &= \{\phi \in \mathcal{S} \mid f_x'(\cdot,t_{n}; w) \leq \phi \leq -f_x'(\cdot,t_{n+1}; -w)\},\quad\forall 1 \leq n < N, \\
	\big[f_x'(\cdot,t_{N}; w), u \big] &= \{\phi\in \mathcal{S}\mid f_x'(\cdot,t_{N}; w) \leq \phi \leq u\},
	\end{aligned}
	\]
	are $\epsilon$-brackets of $\mathcal{S}$ in $L_1(P)$-(semi)norm. Meanwhile, for any $x \in \reals$, one has 
	\[
	f_x'(\cdot, x; w) \in  \bigcup\nolimits_{n < N}\big[f_x'(\cdot,t_{n}; w), -f_x'(\cdot,t_{n+1}; -w) \big]\cup \big[l, -f_x'(\cdot,t_{1}; -w) \big]\cup \big[f_x'(\cdot,t_{N}; w), u \big].
	\] 
	Hence, the bracketing number 
	$
	\mathcal{N}_{[\,]}(\epsilon,\mathcal{S},L^1(P))
	$ is finite.
	Invoking \cite[Theorem 2.4.1]{vanderVaartWellner.23}, we conclude that the class $\mathcal{S}$ is $P$-Glivenko--Cantelli. Applying the same argument to $w = -1$, with simple adjustments for nonincreasingness and left-continuity, and then taking the maximum over $w \in \{-1,1\}$ completes the proof.

\subsection{Proof of \Cref{thm:hard-cvx-comp}}\label{sec:proof-cvx}

\paragraph{Intuition.} 
When $f$ is random convex, the ``square-wave'' function $g$ used in \Cref{sec:proof-lip} cannot be extended in a straightforward way, since convex subdifferentials must be monotone. The remedy is to introduce an orthogonal direction and control the activation of the ``pieces'' by a smoothed ``square-wave'' function. This smoothed function must have decreasing magnitude near the origin, due to the assumption about Lipschitz gradients, hence cannot create a constant gap on its own. However, we can exploit the tiny oscillations in its function values near the origin, combined with the nonsmooth map $t \mapsto \max\{t,0\}$ and the orthogonal direction with a constant slope, to amplify the ``signal'' coming from the random bits. The proof again proceeds in five steps.

\paragraph{Step 1.}
Let $\rho:\reals \to [0,1]$ be a twice continuously differentiable function such that 
\[
\supp(\rho)=[-1,1],\quad\rho([-\tfrac{1}{2},\tfrac{1}{2}])=1,\quad\rho'([-\tfrac{1}{2},\tfrac{1}{2}])=0,\quad|\rho'(\cdot)|\leq 30,\quad|\rho''(\cdot)|\leq 30, 
\]
where $\supp(\rho)$ is the \emph{support} of the function $\rho$.
One possible choice is 
	\[
	\rho(t)= \left\{ 
\begin{array}{rl} 
0 & \mbox{if } |t| \geq 1,\\{} 
1-\theta(2|t|-1) & \mbox{if } |t| \in (\tfrac{1}{2},1),\\{} 
1 & \mbox{if } |t| \leq \tfrac{1}{2},
\end{array}\right.
	\]
	where $\theta(t)=6t^5-15t^4+10t^3$.
	For each $k \in \nats$, define $\psi_k:\reals \to [0,1]$ as
	\[
	\psi_k(t)=\eta_k \rho\Big(\frac{t-\tfrac{1}{k}}{r_k}\Big),
	\]
	where $r_k=\frac{1}{8k^2},\eta_k=r_k^2=\frac{1}{64k^4}$.
	Correspondingly, one has 
	\begin{equation}\label{eq:cvx-comp-psi}
	\supp(\psi_k)=\ball(\tfrac{1}{k},r_k), \quad\psi_k(\ball(\tfrac{1}{k},\tfrac{r_k}{2}))=\eta_k, \quad\psi'_k(\ball(\tfrac{1}{k},\tfrac{r_k}{2}))=0, \quad|\psi_k'(\cdot)|\leq 30, \quad|\psi_k''(\cdot)|\leq 30.
	\end{equation}
	Define a random smooth function $g:\Xi\times \reals^2\to \reals$ as
	\[
	g(\xi,x)=  
 	x_1 + \nsum_{k=1}^\infty \psi_k(x_2)(2\bit_k(\xi)-1),
	\]
	which is finite valued as explained below.
Let $h=\max\{\cdot, 0\}$ and  $f:\Xi \times \reals^2\to \reals$ be
\[
f(\xi,x)=h(g(\xi,x)) + 35\|x\|^2= \max\{g(\xi,x), 0 \} + 35\|x\|^2;
\]
see \Cref{fig:cvx}. 
Define $\delta^\nu=(2240\lceil 2^{\nu+1}\ln(\nu+1)\rceil^4)^{-1}.$

\begin{figure}[t]
	\centering
  \includegraphics[width=0.5965\textwidth]{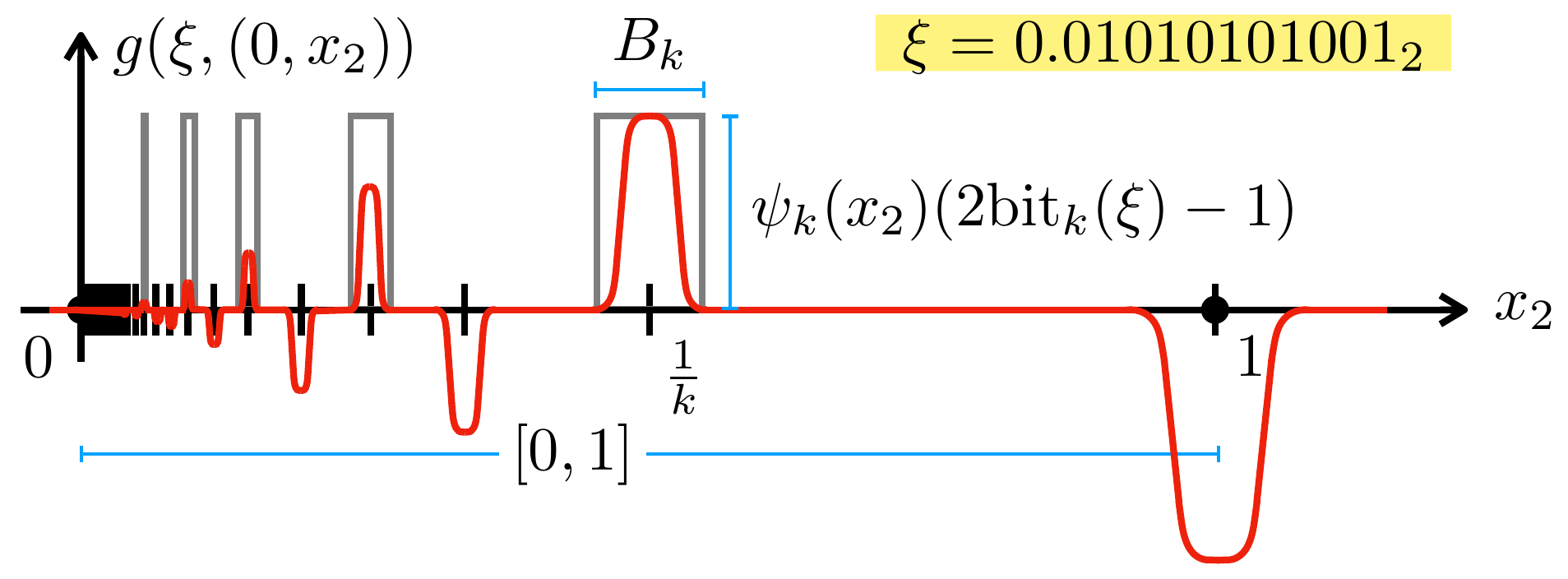}\hspace{.5em}
  \includegraphics[width=0.31\textwidth]{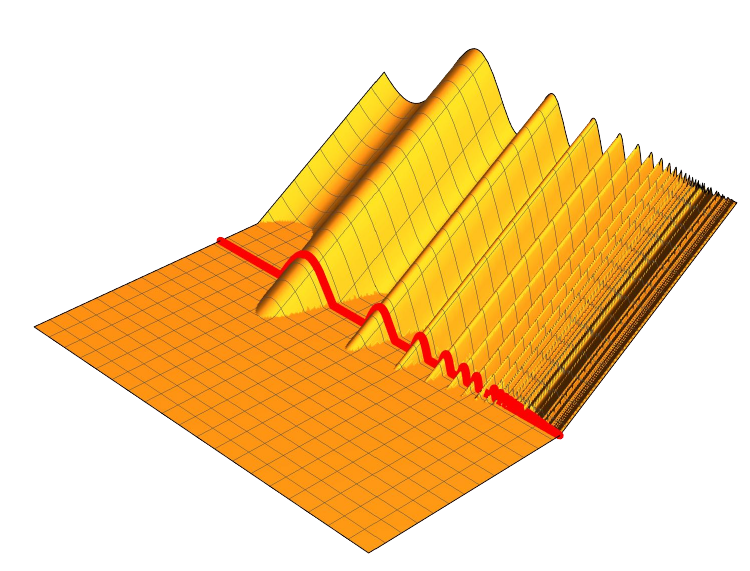}
  \caption{The functions $g(\xi,(0,\cdot))$ and $\max\{g(\xi,\cdot),0\}$ in \Cref{sec:proof-cvx} when $\xi=0.01010101001_2$.}\label{fig:cvx}
\end{figure}

\paragraph{Step 2.}
Let $C=70$.
Since
	$\frac{1}{2}(\frac{1}{k} - \frac{1}{k+1}) > \frac{1}{8k^2}=r_k
	$ for all $k \in \nats$,
	the sets $\{\supp(\psi_k)\}_k$ are disjoint, hence, for any $t \in \reals$, at most one element from $\{\psi_k(t)\}_k$ can be positive. For $x=(x_1,x_2),y=(y_1,y_2)$, at most two  (say, with indices $k_1,k_2$) in $\{\psi_k(x_2)\}_k\cup \{\psi_k(y_2)\}_k$ can be positive, which gives
	\[
	|g(\xi,x)-g(\xi,y)| \leq |x_1-y_1|+\nsum_{j\in\{k_1,k_2\}} |\psi_{j}(x_2) - \psi_{j}(y_2)| \leq 61\|x-y\|.
	\]
Then $g(\xi,\cdot)$ is $C$-Lipschitz continuous for every $\xi \in \Xi$. We next show that $g(\xi,(0,\cdot))$ is differentiable at $0$. If $t \in \supp(\psi_k)$ for some $k$, then $t \ge \frac{1}{2k}$ and
$
|g(\xi,(0,t))| \le \eta_k\le t^2.
$
If $t \notin \cup_{k=1}^\infty \supp(\psi_k)$, then $g(\xi,(0,t))=0$. Hence
$
|t|^{-1}|g(\xi,(0,t))-g(\xi,(0,0))| \le |t| \to 0
$
as $t \to 0$,
so $g(\xi,(0,\cdot))$ is differentiable at $0$ with $g(\xi,(0,\cdot))'(0)=0$. Consequently, a similar argument based on disjoint supports gives
$$
\nabla_x g(\xi,x)=\left[\begin{array}{c}
1 \\
\nsum_{k=1}^\infty \psi_k'(x_2)(2\bit_k(\xi)-1)
\end{array}\right]
$$
and
$
\|\nabla_x g(\xi,x)-\nabla_x g(\xi,y)\|\le C\|x-y\|.
$
Hence $g(\xi,\cdot)$ is $C$-smooth.
Hence, $f(\xi,\cdot)$ is convex and $2C$-Lipschitz on $X$.
From \Cref{prop:bit-measure}, the functions $\xi\mapsto\bit_k(\xi)$, $\xi\mapsto g(\xi,x)$, and then $\xi\mapsto f(\xi,x)$ are measurable.
	Therefore, $f$ and $g$ are Carath\'eodory. Moreover, for any $x \in \reals^2$, one has $|\Ex[f(\bm{\xi},x)]| \leq \Ex[|g(\bm{\xi},x)|] + 35\|x\|^2 \leq |x_1|+\tfrac{1}{64}+35\|x\|^2<\infty.$
	\paragraph{Step 3.} Let $p_k = (0,\tfrac{1}{k}) \in \reals^2$. 
 Then
	for any $k \in \nats$ and $y \in \ball(p_k,\tfrac{r_k}{2})$, by \eqref{eq:cvx-comp-psi}, one has
	\[
	\nsum_{j=1}^\infty \psi_j(y_2) (2\bit_j(\xi)-1)=\eta_k(2\bit_k(\xi)-1), \qquad
	\nsum_{j=1}^\infty \psi_j'(y_2) (2\bit_j(\xi)-1)=0,
	\]
	which implies that 
	\[
	g(\xi,y)= 
	\left\{ 
\begin{array}{rl} 
y_1+\eta_k & \mbox{if } \bit_k(\xi)=1,\\{} 
y_1-\eta_k & \mbox{if } \bit_k(\xi)=0, 
\end{array}\right.\qquad \nabla_x g(\xi,y)=
\left[ \begin{array}{c}
 	1 \\ 0
 \end{array}\right].
	\]
Let $\Delta_k=\frac{1}{32Ck^4}< \frac{1}{16k^2}= \frac{r_k}{2}$.
	For any $\xi \in \Xi, k \in \nats$, and $y \in \ball(p_k, \Delta_k)$, one has $|y_1| \leq \Delta_k <\frac{1}{64k^4}=\eta_k$. Thereby, it follows that
	\begin{equation}\label{eq:cvx-comp-argmax}
	\sgn(g(\xi,y)) = 2\bit_k(\xi) -1,\quad
	\partial h(g(\xi,y)) = \{\bit_k(\xi)\},
	\end{equation} 
	and $f(\xi,\cdot)$ is smooth on $\cup_k \ball(p_k,\Delta_k)$ for any $\xi \in \Xi$. 
	Hence, for any $k \in \nats$ and $y \in \ball(p_k,\Delta_k)$, by \cite[Exercise 10.26]{VaAn}, we can write 
	\[
	\nabla_xf(\xi,y) = \nabla_x g(\xi,y)  \partial h(g(\xi,y)) + 70y=
	\nabla_x g(\xi,y)\bit_k(\xi) + 70y
	= \left[ \begin{array}{c}
 	\bit_k(\xi)  \\ 0
 \end{array}\right] + 70y,
	\]
	where we omit curly braces for singletons for simplicity.
	\paragraph{Step 4.} 
Let $\mathcal{F}$-measurable $E\subset \Omega$ with $\mathbb{P}(E)=1$, $\bar{\bm{\nu}}$, and $\{\bm{k}^\nu\}_\nu$ be defined in \Cref{prop:E}.
	For any $\omega \in E$, $\nu \geq \bar{\bm{\nu}}(\omega)$, one has $\omega \in E^\nu_{\bm{k}^\nu(\omega)}$ for some $\nu \leq \bm{k}^\nu(\omega) \leq \lceil 2^{\nu+1}\ln(\nu+1)\rceil$.
	Then, $\bit_{\bm{k}^\nu(\omega)}(\bm{\xi}^i(\omega))=1$ for all $i \in [\nu]$. Let $\bm{p}^\nu(\omega)=p_{\bm{k}^\nu(\omega)} \in \{p_k\}_{k}$.  For any $y \in \ball(\bm{p}^\nu(\omega), \Delta_{\bm{k}^\nu(\omega)})$, we have
	\[
	\tfrac{1}{\nu}\nsum_{i=1}^\nu\nabla_x f(\bm{\xi}^i(\omega),y)= \left[ \begin{array}{c}
 	\tfrac{1}{\nu}\nsum_{i=1}^\nu  \bit_{\bm{k}^\nu(\omega)}(\bm{\xi}^i(\omega)) \\ 0
 \end{array}\right] + 70y=
		\left[ \begin{array}{c}
 	1 \\ 0
 \end{array}\right] + 70y.
	\]
	Meanwhile, for any $k \in \nats$ and $y \in \ball(p_k, \Delta_k)$, deterministically, we have 
	\[
	\Ex[\nabla_xf(\bm{\xi},y)] = \left[ \begin{array}{c}
 	\Ex[\bit_k(\bm{\xi})] \\ 0
 \end{array}\right] + 70y=
 \left[ \begin{array}{c}
 	1/2 \\ 0
 \end{array}\right] + 70y.
	\]
\paragraph{Step 5.} Note that $C=70$, and
\[
0\leq\delta^\nu 
=
\big(2240\lceil 2^{\nu+1}\ln(\nu+1)\rceil^4\big)^{-1}
= \Delta_{\lceil 2^{\nu+1}\ln(\nu+1)\rceil}
\leq 
\Delta_{\bm{k}^\nu(\omega)}.
\]
Therefore, for any $\omega \in E$,  we conclude that $\bm{p}^\nu(\omega) \in \{p_k \mid \nu \leq k \leq \lceil 2^{\nu+1}\ln(\nu+1)\rceil\}\subset D^\nu$ and
\[
\begin{aligned}
	&\liminf_{\nu \to \infty} \sup_{x \in D^\nu} \inf_{y,\hat{y} \in \ball(x,\delta^\nu)} \Big\| \Ex[\nabla_x f(\bm{\xi},y)] -\tfrac{1}{\nu}\nsum_{i=1}^\nu\nabla_x f(\bm{\xi}^i(\omega),\hat{y}) \Big\| \\
	\geq\;& \sup_{\nu' \geq \bar{\bm{\nu}}(\omega)}  \inf_{\nu \geq \nu'} \inf_{y,\hat{y} \in \ball(\bm{p}^\nu(\omega),\delta^\nu)} \Big\| \Ex[\nabla_x f(\bm{\xi},y)] -\tfrac{1}{\nu}\nsum_{i=1}^\nu\nabla_x f(\bm{\xi}^i(\omega),\hat{y}) \Big\| \geq \tfrac{1}{2} - \inf_{\nu' \geq \bar{\bm{\nu}}(\omega)}  \sup_{\nu \geq \nu'}140\delta^\nu=  \tfrac{1}{2},
\end{aligned}
\]
which completes the proof, since $(\Omega,\mathcal{F})$ is $\mathbb{P}$-complete.

\subsection{Proof of \Cref{cor:hard-ghp,cor:hard-point}}\label{sec:proof-cor}
We use the notation in \Cref{sec:proof-lip}. The proof then follows from the following three observations.
\begin{enumerate}[label=\textnormal{(\alph*)}]
\item Let $h(\xi,x)=f(\xi,x) - x$. For any $\omega \in E$ and large $\nu\in\nats$, the point $\bm{p}^\nu(\omega)$ in \textbf{Step 4} of \Cref{sec:proof-lip} is a local minimizer of $\tfrac{1}{\nu}\nsum_{i=1}^\nu h(\bm{\xi}^i(\omega),\cdot)$ and $\bm{p}^\nu(\omega)\to 0$ as $\nu \to \infty$. \label{item:obs1}
\item For $\omega \in E$ and any $\nu\in\nats$, one has  $1\in \partial\left(\tfrac{1}{\nu}\nsum_{i=1}^\nu f(\bm{\xi}^i(\omega),\cdot)\right)(0)$. \label{item:obs2}
\item $\partial \Ex[f(\bm{\xi}, \cdot)](0) \subset [-\frac{1}{2},\frac{1}{2}]$, hence $0\notin\partial \Ex[h(\bm{\xi}, \cdot)](0) \subset [-\frac{3}{2},-\frac{1}{2}]$. \label{item:obs3}
\end{enumerate}
We now prove these claims. From \textbf{Steps 3} and \textbf{5} in \Cref{sec:proof-lip}, for any $\nu \geq \bar{\bm{\nu}}(\omega)$, we know that $
\tfrac{1}{\nu}\nsum_{i=1}^\nu h(\bm{\xi}^i(\omega), \cdot)$ is constant on $\ball(\bm{p}^\nu(\omega), \delta^\nu)$ with $\delta^\nu > 0$. Moreover, $|\bm{p}^\nu(\omega)|\leq p_\nu= 1/\nu$, hence $\bm{p}^\nu(\omega)\to 0$. This proves \ref{item:obs1}. Next, for any fixed $\nu$, we consider the sequence $\{\bm{p}^{\tau}(\omega)\}_{\tau \geq \max\{\nu, \bar{\bm{\nu}}(\omega)\}}$. By \textbf{Step 4} in \Cref{sec:proof-lip} and $\nu \leq \tau$, one has $\bit_{\bm{k}^\tau(\omega)}(\bm{\xi}^i(\omega))=1$ for all $i \in [\nu]$. Hence, we have 
$\tfrac{1}{\nu}\nsum_{i=1}^\nu \nabla_x f(\bm{\xi}^i(\omega),\bm{p}^\tau(\omega))= 1$. Together with $\bm{p}^\tau(\omega)\to 0$ as $\tau \to \infty$, this implies $1 \in \partial (\tfrac{1}{\nu}\nsum_{i=1}^\nu f(\bm{\xi}^i(\omega),\cdot))(0)$, which establishes \ref{item:obs2}. Finally, using \textbf{Step 2} in \Cref{sec:proof-lip} and Fubini, we obtain $
\Ex [f(\bm{\xi},x)] =  \tfrac{1}{2} \int_0^{\max\{x,0\}}\mathds{1}_{\cup_k B_k}(t)\dd t.
$
Therefore, the map $x\mapsto \Ex[f(\bm{\xi},x)]$ is $\tfrac{1}{2}$-Lipschitz continuous, and thus $\partial \Ex[f(\bm{\xi}, \cdot)](0) \subset [-\frac{1}{2},\frac{1}{2}]$, proving \ref{item:obs3}. 
In summary, \Cref{cor:hard-ghp} follows from \ref{item:obs1} and \ref{item:obs3}, while \Cref{cor:hard-point} follows from \ref{item:obs2} and \ref{item:obs3}.

\subsection{Details of the DC Construction}\label{sec:dc}
Let $\bm{\xi}=(\bm{\xi}_1,\bm{\xi}_2)$ be defined by iid random variables $\bm{\xi}_1,\bm{\xi}_2$ on $(\Omega,\mathcal{F},\mathbb{P})$ with uniform distributions on $\Xi = [0,1]$.
We begin with a new lemma, whose proof is a straightforward modification of \Cref{prop:E}.

\begin{lemma}\label{prop:E-new}
There exist an $\mathcal{F}$-measurable set $E\subset \Omega$ with $\mathbb{P}(E)=1$ and $\nats$-valued random variables $\bar{\bm{\nu}},\bm{k}^1, \bm{k}^2,\ldots$ such that for any $\omega \in E$ and any $\nu \geq \bar{\bm{\nu}}(\omega)$, one has $\nu \leq {\bm{k}^\nu(\omega)} \leq \lceil 4^{\nu+1}\ln(\nu+1)\rceil$, 
\[
\bit_{{\bm{k}^\nu(\omega)}}(\bm{\xi}^1_1(\omega))=\cdots = \bit_{{\bm{k}^\nu(\omega)}}(\bm{\xi}_1^\nu(\omega)) = 1,\quad\text{and}\quad
\bit_{{\bm{k}^\nu(\omega)}}(\bm{\xi}^1_2(\omega))=\cdots = \bit_{{\bm{k}^\nu(\omega)}}(\bm{\xi}_2^\nu(\omega)) = 0.
\]
\end{lemma}
Consider the setting and analysis in \Cref{sec:proof-cvx}, replacing \Cref{prop:E} with \Cref{prop:E-new} wherever appropriate.  Using $g:\Xi \times \reals^2 \to \reals$ in \Cref{sec:proof-cvx}, define a DC function $\ell:\Xi^2 \times \reals^2 \to \reals$ as 
\[
\ell(\xi, x) = \max\{g(\xi_1, x), 0\} + 35\|x\|^2 - \max\{g(\xi_2, x), 0\} - 35\|x\|^2.
\]
Let $h(\xi,x)=\ell(\xi,x) - x_1$.  Parallel to \Cref{sec:proof-cor}, we observe:
\begin{enumerate}[label=\textnormal{(\alph*)}]
\item For any $\omega \in E$ and large $\nu\in\nats$, the point $\bm{p}^\nu(\omega)$ in \textbf{Step 4} of \Cref{sec:proof-cvx} using \Cref{prop:E-new} instead of \Cref{prop:E} is a local minimizer of $\tfrac{1}{\nu}\nsum_{i=1}^\nu h(\bm{\xi}^i(\omega),\cdot)$ and $\bm{p}^{\nu}(\omega)\to 0$ as $\nu \to \infty$. \label{item:dc-obs1}
\item For $\omega \in E$ and any $\nu\in\nats$, one has  $(1,0)\in \partial\left(\tfrac{1}{\nu}\nsum_{i=1}^\nu \ell(\bm{\xi}^i(\omega),\cdot)\right)(0)$. \label{item:dc-obs2}
\item $\partial \Ex[\ell(\bm{\xi}, \cdot)](0) =\{0\}$, hence $0\notin\partial \Ex[h(\bm{\xi}, \cdot)](0) =\{(-1,0)\}$. \label{item:dc-obs3}
\end{enumerate}
We now prove these claims. From \textbf{Step 3} in \Cref{sec:proof-cvx}, for any $\nu \geq \bar{\bm{\nu}}(\omega)$ and $i \in [\nu]$, one has $
g(\bm{\xi}^i_1(\omega), \bm{p}^{\nu}(\omega)) = \bm{p}_1^{\nu}(\omega) + \eta_{\bm{k}^{\nu}(\omega)} > 0$ and 
$g(\bm{\xi}^i_2(\omega), \bm{p}^{\nu}(\omega)) = \bm{p}_1^{\nu}(\omega) - \eta_{\bm{k}^{\nu}(\omega)} < 0.$
Therefore, for any $y$ near $\bm{p}^{\nu}(\omega)$, one has $\tfrac{1}{\nu} \nsum_{i=1}^\nu h(\bm{\xi}^i(\omega),y)=  \eta_{\bm{k}^{\nu}(\omega)},$
which is a constant.
Moreover, $\|\bm{p}^\nu(\omega)\|\leq 1/\nu$, hence $\bm{p}^\nu(\omega)\to 0$. This proves \ref{item:dc-obs1}.
Next, for any fixed $\nu$, we consider the sequence $\{\bm{p}^{\tau}(\omega)\}_{\tau \geq \max\{\nu, \bar{\bm{\nu}}(\omega)\}}$. By \textbf{Step 4} in \Cref{sec:proof-cvx} (using \Cref{prop:E-new}) and $\nu \leq \tau$, for all $i \in [\nu]$, one has $
 \bit_{\bm{k}^\tau(\omega)}(\bm{\xi}_1^i(\omega))=1$ and 
 $\bit_{\bm{k}^\tau(\omega)}(\bm{\xi}_2^i(\omega))=0.$
 Hence, from \textbf{Step 3} in \Cref{sec:proof-cvx}, we have 
$
\tfrac{1}{\nu}\nsum_{i=1}^\nu \nabla_x\ell(\bm{\xi}^i(\omega),\bm{p}^\tau(\omega))= (1,0)$.
Together with $\bm{p}^\tau(\omega)\to 0$ as $\tau \to \infty$, this establishes \ref{item:dc-obs2}. 
 Finally, since $\bm{\xi}_1$ and $\bm{\xi}_2$ have identical distributions, observe that $
\Ex[\ell(\bm{\xi}, x)] \equiv 0$.
Therefore, $\partial \Ex[\ell(\bm{\xi}, \cdot)](0) =\{0\}$, proving \ref{item:dc-obs3}. 

\paragraph{Acknowledgement.} This work is supported in part by the Office of Naval Research under grant N00014-24-1-2492.
{\small
	\bibliography{ref}
\bibliographystyle{plainnat}
}

\end{document}